\documentclass[11pt,reqno]{amsart}

\setlength{\textwidth}{6.3in} \setlength{\textheight}{9.25in}
\setlength{\evensidemargin}{0in} \setlength{\oddsidemargin}{0in}
\setlength{\topmargin}{-.3in}

\usepackage{graphicx}

\usepackage{xspace,tikz}
\usepackage{amsmath,amsthm,amsfonts,amssymb,latexsym,mathrsfs,color,extarrows}
\usepackage{hyperref}

\usepackage{pgfplots}
\pgfplotsset{compat=newest}
\usetikzlibrary{shapes.geometric,arrows,fit,matrix,positioning}
\tikzset
{
    treenode/.style = {circle, draw=black, align=center, minimum size=1cm},
    subtree/.style  = {isosceles triangle, draw=black, align=center, minimum height=0.5cm, minimum width=1cm, shape border rotate=90, anchor=north}
}

\newtheorem{theorem}{Theorem}
\newtheorem{corollary}[theorem]{Corollary}
\newtheorem{proposition}[theorem]{Proposition}

\newtheorem{lemma}[theorem]{Lemma}
\newtheorem{definition}[theorem]{Definition}
\newtheorem{example}[theorem]{Example}
\newtheorem*{STG}{Symmetric transformation of grammar}

\newcommand{\XX}{{\rm X\,}}

\newcommand{\basc}{{\rm basc\,}}

\newcommand{\suc}{{\rm suc\,}}

\newcommand{\plat}{{\rm plat\,}}

\newcommand{\des}{{\rm des\,}}

\newcommand{\exc}{{\rm exc\,}}

\newcommand{\aexc}{{\rm aexc\,}}

\newcommand{\fix}{{\rm fix\,}}

\newcommand{\mdn}{\mathcal{D}}

\newcommand{\msn}{\mathfrak{S}_n}

\newcommand{\ms}{\mathfrak{S}}

\newcommand{\lrf}[1]{\lfloor #1\rfloor}

\newcommand{\mq}{\mathcal{Q}}
\newcommand{\mqn}{\mathcal{Q}_n}

\newcommand{\asc}{{\rm asc\,}}

\newcommand{\Eulerian}[2]{\genfrac{<}{>}{0pt}{}{#1}{#2}}
\newcommand{\Stirling}[2]{\genfrac{\{}{\}}{0pt}{}{#1}{#2}}

\linespread{1.25}

\title{Eulerian polynomials, Stirling permutations and increasing trees}
\author[S.-M.~Ma]{Shi-Mei Ma}
\address{School of Mathematics and Statistics,
        Northeastern University at Qinhuangdao,
         Hebei 066000, P.R. China}
\email{shimeimapapers@163.com (S.-M. Ma)}
\author[J.~Ma]{Jun Ma}
\address{Department of mathematics, Shanghai Jiao Tong University, Shanghai, P.R. China}
\email{majun904@sjtu.edu.cn}
\author{Jean Yeh}
\address{Departemnt of Mathematics, National Kaohsiung Normal University, Kaohsiung 82446, Taiwan}
\email{chunchenyeh@nknu.edu.tw}
\author{Yeong-Nan Yeh}
\address{Institute of Mathematics, Academia Sinica, Taipei, Taiwan}
\email{mayeh@math.sinica.edu.tw}
\subjclass[2010]{Primary 05A05; Secondary 05A19}
\begin{document}

\maketitle
\begin{abstract}
We study two generalizations of the $\gamma$-expansion
of Eulerian polynomials from the viewpoint of the decompositions of statistics.
We first present an expansion formula
of the trivariate Eulerian polynomials, which are the enumerators for the joint distribution of descents, big ascents and successions of permutations.
And then, inspired by the work of Chen and Fu on the
trivariate second-order Eulerian polynomials, we show the $e$-positivity of the multivariate $k$-th order Eulerian polynomials, which are the enumerators for the joint distribution of
ascents, descents and $j$-plateaux of $k$-Stirling permutations.
We provide combinatorial interpretations for the coefficients of these two expansions in terms of increasing trees.
\bigskip

\noindent{\sl Keywords}: Eulerian polynomials; Successions; Stirling permutations; Increasing trees
\end{abstract}
\date{\today}
\section{Introduction}
Let $f(x)=\sum_{i=0}^nf_ix^i$ be a symmetric polynomial of degree $n$, i.e.,
$f_i=f_{n-i}$ for any $0\leqslant i\leqslant n$. Then $f(x)$ can be expanded uniquely as
$$f(x)=\sum_{k=0}^{\lrf{{n}/{2}}}\gamma_kx^k(1+x)^{n-2k}.$$
We say that $f(x)$ is {\it $\gamma$-positive}
if $\gamma_k\geqslant 0$ for $0\leqslant k\leqslant \lrf{{n}/{2}}$ (see~\cite{Gal05,Lin21} for instance).
Notably, $\gamma$-positivity of a polynomial implies that its coefficients are symmetric and unimodal, and the
coefficients of $\gamma$-positive polynomials often have nice combinatorial interpretations.
We refer the reader to Athanasiadis's survey article~\cite{Athanasiadis17} for details.

This paper is devoted to generalize the $\gamma$-expansion of Eulerian polynomials.
There are two objects of this paper.
We first study the enumerators for the joint distribution of descents, big ascents and successions of permutations,
and then we study the enumerators for the joint distribution of
ascents, descents and $j$-plateaux of $k$-Stirling permutations.

Let $\msn$ denote the symmetric group of all permutations of $[n]$, where $[n]=\{1,2,\ldots,n\}$.
As usual, we write $\pi=\pi(1)\pi(2)\cdots\pi(n)\in\msn$.
A {\it descent} (resp.~{\it ascent, excedance}) of $\pi$ is an index $i\in[n-1]$
such that $\pi(i)>\pi(i+1)$ (resp.~$\pi(i)<\pi(i+1)$, $\pi(i)>i$). Let $\des(\pi)$ (resp.~$\asc(\pi)$, $\exc(\pi)$) denote the number of descents (resp.~ascents, excedances) of $\pi$.
It is well known that descents, ascents and excedances are equidistributed over the symmetric groups,
and their common enumerative polynomials are the {\it Eulerian polynomials} $A_n(x)$, i.e.,
$$A_n(x)=\sum_{\pi\in\msn}x^{\des(\pi)}=\sum_{\pi\in\msn}x^{\asc(\pi)}=\sum_{\pi\in\msn}x^{\exc(\pi)}.$$
Thus any other
statistic that is equidistributed with $\des$ or $\exc$ is called an Eulerian statistic.

There has been much work on the combinatorial expansions of Eulerian polynomials, see e.g.~\cite{Chow08,Petersen07,Zhuang17}.
Let $\Stirling{n}{k}$ be the {\it Stirling number of the second kind}, which counts the number of partitions
of $[n]$ into $k$ nonempty blocks.
A famous combinatorial expansion of the Eulerian polynomial is the Frobenius formula:
\begin{equation}\label{Frobenius}
xA_n(x)=\sum_{k=1}^nk!\Stirling{n}{k}x^k(1-x)^{n-k}~\text{for any $n\geqslant 1$}.
\end{equation}
We say that an index $i$ is a {\it double descent} of $\pi\in\msn$ if $\pi(i-1)>\pi(i)>\pi(i+1)$, where $\pi(0)=\pi(n+1)=0$.
Another famous combinatorial expansion was first established by Foata and Sch\"utzenberger~\cite{Foata70}, which says that
\begin{equation}\label{Anx-gamma}
A_n(x)=\sum_{i=0}^{\lrf{(n-1)/2}}\gamma_{n,i}x^i(1+x)^{n-1-2i},
\end{equation}
where $\gamma_{n,i}$ is the number of permutations $\pi\in\msn$ which have no double descents and $\des(\pi)=i$.
The $\gamma$-expansion~\eqref{Anx-gamma}, along several multivariate refinements and $q$-analogues, were frequently discovered in the past decades, see~\cite{Athanasiadis17,Ma19,Zhuang17} and references therein.
For example, by using the theory of enriched $P$-partitions, Stembridge~\cite[Remark 4.8]{Stembridge97} showed that
\begin{equation}\label{AnxWni}
A_n(x)=\frac{1}{2^{n-1}}\sum_{i=0}^{\lrf{(n-1)/2}}4^iP(n,i)x^i(1+x)^{n-1-2i},
\end{equation}
where $P(n,i)$ is the the number of permutations in $\msn$ with $i$ {\it interior peaks}, i.e., the indices $i\in\{2,\ldots,n-1\}$ such that $\pi(i-1)<\pi(i)>\pi(i+1)$.
By using modified Foata-Strehl action~\cite{Branden08}, one can see that the expansion~\eqref{AnxWni} is equivalent to~\eqref{Anx-gamma}.

In the past decades, the bijections between permutations and increasing trees are repeatedly discovered (see~\cite[Section~1.5]{Stanley2011} for instance).
In this paper, by using the theory of context-free grammars, we shall present two generalizations
of~\eqref{Anx-gamma}, and the combinatorial interpretations for the coefficients of the generalized expansions are given in terms of increasing trees.
The organization of this paper is as follows. In the next section, we give a short survey about the theory of context-free grammars. In Section~\ref{Section2}, we study the joint distribution of descents, big ascents and successions of permutations.
In Section~\ref{Section3}, we study the joint distribution of descents, ascents and $j$-plateaux of $k$-Stirling permutations.
\section{The theory of context-free grammars}\label{grammarsection}
For an alphabet $A$, let $\mathbb{Q}[[A]]$ be the rational commutative ring of formal power
series in monomials formed from letters in $A$. Following Chen~\cite{Chen93}, a {\it context-free grammar} (also known as {\it Chen's grammar}) over
$A$ is a function $G: A\rightarrow \mathbb{Q}[[A]]$ that replaces each letter in $A$ by a formal function over $A$.
The formal derivative $D_G$ with respect to $G$ satisfies the derivation rule:
$$D_G(u+v)=D_G(u)+D_G(v),~D_G(uv)=D_G(u)v+uD_G(v).$$

Context-free grammar is a powerful tool for studying
exponential structures in combinatorics. We refer the reader to~\cite{Chen17,Chen22,Dumont96,Lin21,Ma19} for further
information. For example, for a large number of classical
combinatorial polynomials, Dumont~\cite{Dumont96} gave
a natural explanation of the mysterious coincidences between calculus
and enumeration.
Let us now recall a grammatical interpretation of Eulerian polynomials.
\begin{proposition}[{\cite[Section~2.1]{Dumont96}}]\label{Dumont96}
Let $G_1=\{x\rightarrow xy, y\rightarrow xy\}$.
Then
\begin{equation*}
D_{G_1}^n(x)=x\sum_{k=0}^{n-1}\Eulerian{n}{k}x^{k}y^{n-k}\quad\textrm{for $n\geqslant 1$}.
\end{equation*}
Setting $y=1$, one has $D_{G_1}^n(x)\mid_{y=1}=xA_n(x)$.
\end{proposition}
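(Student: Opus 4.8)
The plan is to prove the grammatical interpretation by induction on $n$, showing that the formal derivative $D_{G_1}^n(x)$ encodes the Eulerian numbers exactly as stated. The key idea is that applying $D_{G_1}$ once to a monomial $x^{k}y^{n-k}$ will redistribute the weight according to the recurrence satisfied by the Eulerian numbers, so the whole computation reduces to matching the grammar's action against the classical recurrence $\Eulerian{n}{k} = (k+1)\Eulerian{n-1}{k} + (n-k)\Eulerian{n-1}{k-1}$.

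First I would verify the base case. For $n=1$ we have $D_{G_1}(x) = xy$, and the claimed formula gives $x\sum_{k=0}^{0}\Eulerian{1}{0}x^{0}y^{1} = xy$ since $\Eulerian{1}{0}=1$; these agree. Next, assuming the formula holds for $n$, I would apply $D_{G_1}$ to the inductive expression $x\sum_{k=0}^{n-1}\Eulerian{n}{k}x^{k}y^{n-k}$. The crucial computation is the action of $D_{G_1}$ on a single monomial $x^{k+1}y^{n-k}$ (absorbing the leading $x$): using the derivation rule together with $D_{G_1}(x)=xy$ and $D_{G_1}(y)=xy$, the Leibniz rule gives
\begin{equation*}
D_{G_1}(x^{a}y^{b}) = a\,x^{a-1}(xy)\,y^{b} + b\,x^{a}y^{b-1}(xy) = (a+b)\,x^{a}y^{b+1}\cdot\tfrac{1}{y}\cdot y \ \text{-type terms},
\end{equation*}
which after careful bookkeeping splits $x^{a}y^{b}$ into a contribution raising the $x$-degree and one raising the $y$-degree. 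Tracking these two contributions across the sum is what produces the two terms $(k+1)$ and $(n-k)$ in the Eulerian recurrence.

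The main obstacle I expect is the bookkeeping in matching exponents: one must be careful that the overall factor of $x$ in front is preserved and that the index shifts line up so that the coefficient of $x^{k+1}y^{n+1-k}$ in $D_{G_1}^{n+1}(x)$ is precisely $(k+1)\Eulerian{n}{k} + (n+1-k)\Eulerian{n}{k-1}$, which equals $\Eulerian{n+1}{k}$. Getting the boundary indices right (the $k=0$ and $k=n$ terms, where one of the two contributions vanishes) requires attention but is routine once the generic step is set up correctly. Finally, the specialization $y=1$ is immediate: setting $y=1$ in $x\sum_{k}\Eulerian{n}{k}x^{k}y^{n-k}$ yields $x\sum_{k}\Eulerian{n}{k}x^{k} = xA_n(x)$, using the standard fact that $A_n(x)=\sum_{k=0}^{n-1}\Eulerian{n}{k}x^{k}$. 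The entire argument is elementary; its only subtlety is the discipline needed in the exponent arithmetic of the inductive step.
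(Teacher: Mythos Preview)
The paper does not actually prove this proposition: it is stated as a recall of Dumont's result and cited to \cite[Section~2.1]{Dumont96}, with no argument given. So there is no ``paper's own proof'' to compare against; your task here is simply to supply a verification, and the inductive approach you outline is the standard one.

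Your plan is correct. The base case is fine, and in the inductive step the computation you need is
\[
D_{G_1}(x^{k+1}y^{n-k}) = (k+1)x^{k+1}y^{n-k+1} + (n-k)x^{k+2}y^{n-k},
\]
so that the coefficient of $x^{j+1}y^{n+1-j}$ in $D_{G_1}^{n+1}(x)$ is $(j+1)\Eulerian{n}{j} + (n+1-j)\Eulerian{n}{j-1} = \Eulerian{n+1}{j}$, exactly as you say. One cosmetic point: the displayed equation you wrote, ending in ``$(a+b)\,x^{a}y^{b+1}\cdot\tfrac{1}{y}\cdot y$ -type terms'', is not a meaningful equality and would confuse a reader; just write out the two-term expansion cleanly as above and the rest of the argument goes through without any ``careful bookkeeping'' caveats. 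The boundary cases $j=0$ and $j=n$ are automatically handled by the convention $\Eulerian{n}{-1}=\Eulerian{n}{n}=0$, and the specialization $y=1$ is immediate.
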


The following two definitions will be used repeatedly in our discussion.
\begin{definition}[{\cite{Chen17}}]
A grammatical labeling is an assignment of the underlying elements of a combinatorial structure
with variables, which is consistent with the substitution rules of a grammar.
\end{definition}

\begin{definition}[{\cite{Ma19}}]
A change of grammars is a substitution method in which the original grammar is replaced with functions of the other grammar.
\end{definition}

In~\cite{Ma19}, the change of grammars method was introduced and the power of it is to obtain recurrences and combinatorial interpretations of the $\gamma$-coefficients and
partial $\gamma$-coefficients of various polynomials.
Recently, by using the change of grammars method, Lin etal.~\cite{Lin21} proved the partial-$\gamma$-positivity of trivariate enumerative polynomials of Stirling multipermutations.

Let us first give some examples of the change of grammars.
Consider the change of variables:
\begin{equation}\label{change-grammars01}
\left\{
  \begin{array}{ll}
    u=xy, &  \\
    v=x+y, &
  \end{array}
\right.
\end{equation}
The grammar $G_1=\{x\rightarrow xy, y\rightarrow xy\}$ is transformed into a new grammar
\begin{equation}\label{G2-def}
G_2=\{u\rightarrow uv, v\rightarrow 2u\}.
\end{equation}
Since $D_{G_1}^n(x)=D_{G_2}^{n-1}(u)$ for $n\geqslant 1$, it immediately follows that $A_n(x)$ is $\gamma$-positive (see~\cite{Ma19} for details).
In the following, we provide another illustration of the change of grammars method.

\noindent{\bf A grammatical proof of the Frobenius formula~\eqref{Frobenius}:}
\begin{proof}
Let $G_1$ be the grammar given in Proposition~\ref{Dumont96}. Set $u=y-x$. Observe that $$D_{G_1}(x)=x(u+x),~D_{G_1}(u)=0.$$
Then we get a new grammar
$G_3=\{x\rightarrow x(u+x),~u\rightarrow 0\}$.
Note that
$$D_{G_3}(x)=x(u+x), D_{G_3}^2(x)=(u+x)(xu+2x^2),~D_{G_3}^3(x)=(u+x)(xu^2+6x^2u+6x^3).$$
For $n\geqslant 1$, assume that $D_{G_3}^n(x)=(u+x)\sum_{k=1}^nE_{n,k}x^ku^{n-k}$.
Then
\begin{align*}
D_{G_3}^{n+1}(x)&=D_{G_3}(D_{G_3}^n(x))\\
&=D_{G_3}\left((u+x)\sum_{k=1}^nE_{n,k}x^ku^{n-k}\right)\\
&=(u+x)\sum_{k}E_{n,k}\left(kx^ku^{n-k+1}+(1+k)x^{k+1}u^{n-k}\right),
\end{align*}
which yields the recurrence relation $E_{n+1,k}=k(E_{n,k}+E_{n,k-1})$.
From~\cite[A019538]{Sloane}, we see that $E_{n,k}$ satisfy the same recurrence relation
and initial conditions as  $k!\Stirling{n}{k}$, so they agree.
Setting $y=1$, we obtain $u=1-x$. Therefore, we find that
\begin{equation*}
D_{G_3}^{n}(x)\mid_{y=1}=\sum_{k=1}^nk!\Stirling{n}{k}x^k(1-x)^{n-k}.
\end{equation*}
Comparing with Proposition~\ref{Dumont96} yields the desired formula~\eqref{Frobenius}, and so the proof is complete.
\end{proof}

Consider the following change of variables:
\begin{equation}\label{change-grammars}
\left\{
  \begin{array}{ll}
    u=2xy, &  \\
    v=x+y, &
  \end{array}
\right.
\end{equation}
the grammar $G_1=\{x\rightarrow xy, y\rightarrow xy\}$ is transformed into a new grammar
\begin{equation}\label{G3-def}
G_4=\{u\rightarrow uv, v\rightarrow u\}.
\end{equation}
The grammar $G_4$ was introduced by Dumont~\cite{Dumont96} when he studied Andr\'e polynomials.

A {\it rooted tree} of order $n$ with the vertices labelled $1,2,\ldots,n$, is an increasing tree if the node
labelled $1$ is distinguished as the root, and for each $2\leqslant i\leqslant n$, the labels of the nodes in the unique
path from the root to the node labelled $i$ form an increasing sequence.
An {\it increasing tree} on $\{0,1,2,\ldots,n\}$ is a rooted tree
with vertex set $\{0,1,2,\ldots,n\}$ in which the labels of the vertices are increasing along any path from the root 0 to a leaf.
The {\it degree} of a vertex is referred to the number of its children.
A {\it 0-1-2 increasing tree} is an increasing tree in which the degree of any vertex is at most two.
Let $\ell(T)$ denote the number of leaves of a tree and let $r(T)$ denote the number of vertices with degree $1$.
The {\it Andr\'e polynomials} are defined by
\begin{equation}\label{Enuv}
E_n(u,v)=\sum_{T}u^{\ell(T)}v^{r(T)},
\end{equation}
where the sum ranges over 0-1-2 increasing trees on $\{0,1,2,,n\}$.
Dumont showed that
\begin{equation}\label{DumontG3}
D_{G_4}^n(u)=E_n(u,v).
\end{equation}
Following Chen and Fu~\cite{Chen17}, the following grammatical labeling leads to the relation~\eqref{DumontG3}:
a leaf is labeled by $u$, a vertex of degree 1 is labeled by $v$ and a vertex of degree 2 is labeled by 1.

A {\it plane tree} is a rooted tree in which the children of each vertex are linearly ordered (from left to right, say).
Increasing plane trees are also called plane recursive trees, see~\cite{Janson08,Janson11}. A {\it 0-1-2 increasing plane tree} on $[n]$
is an increasing plane tree for which each vertex has degree at most two.
From the above discussion, we see that for $n\geqslant 1$,
\begin{equation}\label{xAnx}
xA_n(x)=D_{G_1}^n(x)\mid_{y=1}=D_{G_2}^{n-1}(u)\mid_{y=1}=\sum_{i=1}^{\lrf{(n+1)/2}}\gamma_{n,i-1}x^i(1+x)^{n+1-2i}.
\end{equation}
By using grammatical labeling,
Chen and Fu~\cite{Chen22} found the following result.
\begin{proposition}[{\cite[Theorem~3.1]{Chen22}}]
Let $\gamma_{n,i-1}$ be the coefficient defined by~\eqref{xAnx}, where $1\leqslant i\leqslant \lrf{(n+1)/2}$.
Then $\gamma_{n,i-1}$ equals the
number of 0-1-2 increasing plane trees on $[n]$ with $i$ leaves.
\end{proposition}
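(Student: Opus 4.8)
The plan is to give a grammatical labeling of $0$-$1$-$2$ increasing plane trees that realizes the grammar $G_2=\{u\rightarrow uv,\ v\rightarrow 2u\}$, and then to read off the $\gamma$-coefficients from $D_{G_2}^{n-1}(u)$ via~\eqref{xAnx}. Following the labeling used for the Andr\'e polynomials in~\eqref{Enuv}, I would label each leaf by $u$, each vertex of degree $1$ by $v$, and each vertex of degree $2$ by $1$. The identity to establish is
\begin{equation*}
D_{G_2}^{n-1}(u)=\sum_{T}u^{\ell(T)}v^{r(T)},
\end{equation*}
where the sum ranges over all $0$-$1$-$2$ increasing plane trees $T$ on $[n]$, and $\ell(T)$, $r(T)$ denote the numbers of leaves and of degree-$1$ vertices respectively.

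First I would prove this identity by induction on $n$. The base case $n=1$ is the single-vertex tree labeled $u$, matching $D_{G_2}^0(u)=u$. For the inductive step, observe that in an increasing tree on $[n+1]$ the largest label $n+1$ must be a leaf, so passing from $[n]$ to $[n+1]$ amounts to attaching a new leaf $n+1$ to some existing vertex $w$. If $w$ is a leaf it becomes a degree-$1$ vertex, so its label changes from $u$ to $v$ while the new leaf contributes $u$; this is exactly the substitution $u\rightarrow uv$. If $w$ already has degree $1$ it becomes a degree-$2$ vertex, its label changing from $v$ to $1$, and, crucially, in a \emph{plane} tree the new child may be placed either to the left or to the right of the existing child, giving two trees; together with the new leaf this yields $v\rightarrow 2u$. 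If $w$ has degree $2$ no child can be added, consistent with the constant label $1$ being annihilated by $D_{G_2}$. Thus $D_{G_2}$ acts on the labeled trees exactly as the tree-growth operation, which proves the identity.

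Next I would specialize. Setting $y=1$ in the change of variables~\eqref{change-grammars01} gives $u=x$ and $v=1+x$, so~\eqref{xAnx} becomes
\begin{equation*}
xA_n(x)=D_{G_2}^{n-1}(u)\mid_{y=1}=\sum_{T}x^{\ell(T)}(1+x)^{r(T)}.
\end{equation*}
To match this against the $\gamma$-expansion $\sum_i\gamma_{n,i-1}x^i(1+x)^{n+1-2i}$, I would show that $\ell$ and $r$ are not independent. Writing $d_2$ for the number of degree-$2$ vertices, counting vertices gives $\ell+r+d_2=n$, while counting edges, since every non-root vertex is the child of exactly one vertex, gives $r+2d_2=n-1$. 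Eliminating $d_2$ yields $d_2=\ell-1$ and hence $r=n+1-2\ell$ (the constraint $r\geqslant 0$ simultaneously forces $\ell\leqslant\lrf{(n+1)/2}$, matching the stated range of $i$). Therefore every tree with $\ell=i$ leaves contributes precisely $x^i(1+x)^{n+1-2i}$, so the grammatical sum already \emph{is} the $\gamma$-expansion, and $\gamma_{n,i-1}$ equals the number of $0$-$1$-$2$ increasing plane trees on $[n]$ with $i$ leaves.

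I expect the main obstacle to be the inductive step, specifically verifying that the plane structure is correctly encoded by the coefficient $2$ in the rule $v\rightarrow 2u$: one must check that attaching a child at a degree-$1$ vertex of a plane tree genuinely produces two distinct labeled trees (left versus right insertion), and that no other insertions are possible at leaves or at degree-$2$ vertices. By contrast, the vertex/edge count forcing $r=n+1-2\ell$ is routine and uses only the two tree identities above.
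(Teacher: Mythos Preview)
Your proposal is correct and follows exactly the grammatical-labeling approach that the paper attributes to Chen and Fu: label leaves by $u$, degree-$1$ vertices by $v$, and degree-$2$ vertices by $1$, so that attaching the new leaf $n+1$ realizes the rules $u\to uv$ and $v\to 2u$ of $G_2$, the factor $2$ reflecting the left/right choice in a plane tree. The paper itself does not give a proof but only cites \cite[Theorem~3.1]{Chen22}; your vertex/edge count $r(T)=n+1-2\ell(T)$ is the standard way to finish, and all the steps you describe go through without issue.
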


Let $\XX_n=\{x_1,x_2,\ldots,x_n\}$ be a set of commuting variables.
Define $$S_n(x)=\prod_{i=1}^n(x-x_i)=\sum_{k=0}^n{(-1)}^ke_{k}x^{n-k}.$$
Then the {\it $k$-th
elementary symmetric function} associated with $\XX_n$ is defined by $$e_k=\sum_{1\leqslant i_1<i_2<\cdots<i_k\leqslant n}x_{i_1}x_{i_2}\cdots x_{i_k}.$$
In particular, $e_0=1$, $e_1=\sum_{i=1}^nx_i$ and $e_n=x_1x_2\cdots x_n$.
A function $f(x_1,x_2,\ldots)\in \mathbb{R}[x_1,x_2,\ldots]$ is said to be {\it symmetric} if it is invariant under
any permutation of its indeterminates.
We say that a symmetric function is {\it $e$-positive} if it can be written as a nonnegative linear combination of elementary symmetric
functions.
In~\cite{Chen22}, Chen and Fu introduced a new change of variables:
\begin{equation}\label{grammars02}
\left\{
  \begin{array}{ll}
    u=x+y+z, &  \\
    v=xy+yz+zx, &\\
    w=xyz.
  \end{array}
\right.
\end{equation}
By using~\eqref{grammars02}, they discovered the new grammar~\eqref{G8def}, and then they proved the $e$-positivity of trivariate
second-order Eulerian polynomials (see~\cite[Section~4]{Chen22}).
As a unified extension of~\eqref{change-grammars01},~\eqref{change-grammars} and~\eqref{grammars02}, we now introduce a definition. A special case is discussed in Section~\ref{Section3}.
\begin{STG}
Let $G$ be the grammar defined by
\begin{equation*}\label{type-grammar}
G=\{x_1\rightarrow f_1(x_1,x_2,\ldots, x_n), x_2\rightarrow f_2(x_1,x_2,\ldots,x_n),\cdots, x_n\rightarrow f_n(x_1,x_2,\ldots,x_n)\}.
\end{equation*}
Suppose that $D_{G}^n\left(F(x_1,x_2,\ldots, x_n)\right)$ are symmetric functions.
The symmetric transformation of $G$ is defined by $u_j=g_j(e_1,e_2,\ldots,e_n)$, where $F$ and $g_j$ are functions, $e_{\ell}$ are the $\ell$-th
elementary symmetric functions associated with $\{x_1,x_2,\ldots,x_n\}$, and $1\leqslant j,\ell\leqslant n$.
\end{STG}

\section{Trivariate Eulerian polynomials}\label{Section2}
\subsection{Preliminary}
\hspace*{\parindent}

The enumeration of permutations according to the number of successions was initiated by Riordan~\cite{Riordan45}.
A {\it succession} of $\pi\in\msn$ is an index $k\in [n-1]$ such that $\pi(k+1)=\pi(k)+1$.
Let $P(n,r,s)=\#\{\pi\in\msn: \asc(\pi)=r,~\suc(\pi)=s\}$, where $\suc(\pi)$ is the number of successions of $\pi$.
Roselle~\cite[Eq.~(2.1)]{Roselle68} proved that
$$P(n,r,s)=\binom{n-1}{s}P(n-s,r-s,0).$$
Denote by $P^*(n,r)$ the number of permutations of $\msn$ with $r$ ascents, no successions and $\pi(1)>1$.
Let $P^*_n(x)=\sum_{k=0}^{n-1}P^*(n,r)x^r$.
According to~\cite[Eq.~(4.3)]{Roselle68}, one has
\begin{equation}\label{EGF1}
\sum_{n=0}^{\infty}P^*_n(x)\frac{z^n}{n!}=\frac{1-x}{\mathrm{e}^{xz}-x\mathrm{e}^{z}}.
\end{equation}
The reader is referred to~\cite{Brenti18,Clarke97,Diaconis14,Mansour16} for recent progresses in studies of succession statistics.
In particular, Diaconis, Evans and Graham~\cite{Diaconis14} found the following remarkable result, which was generalized soon by Brenti and Marietti~\cite{Brenti18}.
\begin{proposition}[{\cite{Diaconis14}}]\label{PropDiaconis}
For all $I\subseteq [n-1]$, we have
\begin{align*}
&\#\{\pi\in\msn: \{k\in [n-1]: \pi(k+1)=\pi(k)+1\}= I\}\\
&=\#\{\pi\in\msn: \{k\in [n-1]: \pi(k)=k\}= I\}.
\end{align*}
\end{proposition}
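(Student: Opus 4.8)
The plan is to prove the stronger statement that, for \emph{both} statistics, the number of permutations whose distinguished set \emph{contains} a prescribed $I$ depends only on $|I|$, and then to recover the ``exactly $I$'' counts by Möbius inversion on the Boolean lattice of subsets of $[n-1]$. Concretely, for $I\subseteq[n-1]$ I would set
$$a(I)=\#\{\pi\in\msn:\pi(k+1)=\pi(k)+1\ \text{for all}\ k\in I\},\qquad b(I)=\#\{\pi\in\msn:\pi(k)=k\ \text{for all}\ k\in I\}.$$
For any set-valued statistic one has $N_{\supseteq}(I)=\sum_{J\supseteq I}N_{=}(J)$, whence $N_{=}(I)=\sum_{J\supseteq I}(-1)^{|J|-|I|}N_{\supseteq}(J)$. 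Since $a(I)$ and $b(I)$ are exactly the superset counts $N_{\supseteq}$ for successions and for fixed points respectively, it suffices to prove $a(I)=b(I)$ for every $I$; the two ``exactly $I$'' counts in the statement are then forced to agree term by term in the inversion.

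The fixed-point side is immediate. If $|I|=m$, then demanding $\pi(k)=k$ for all $k\in I$ pins down $\pi$ on those $m$ positions and leaves the remaining $n-m$ positions (including position $n$, whose fixed-point status is unconstrained) completely free, so $b(I)=(n-m)!$, depending only on $m$. The succession side I would handle by contraction. View $1,\ldots,n$ as vertices and each $k\in I$ as an edge joining $k$ and $k+1$; the resulting graph is a disjoint union of paths, so $I$ partitions the positions into $n-m$ maximal blocks of consecutive indices. Imposing $\pi(k+1)=\pi(k)+1$ for each $k\in I$ forces every block of $\ell$ consecutive positions to carry a run of $\ell$ consecutive values in increasing order. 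A permutation counted by $a(I)$ is therefore equivalent to a linear ordering of the $n-m$ blocks by value: the first block in the order receives the smallest run of values, the next block the next run, and so on, the run lengths being dictated by the block sizes. Distinct orderings produce distinct permutations and every admissible $\pi$ arises exactly once, giving a bijection with $\msn$ on $n-m$ symbols and hence $a(I)=(n-m)!$.

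Thus $a(I)=b(I)=(n-m)!$ for all $I\subseteq[n-1]$, and the inclusion--exclusion identity above yields the claimed equality of the ``exactly $I$'' counts. I expect the only real content to be the contraction argument for $a(I)$; the delicate points there are verifying that the forced value-runs have lengths matching the block sizes exactly (so that the blocks may be reordered freely by value without any size obstruction) and confirming that the correspondence with orderings of blocks remains a genuine bijection even when several blocks happen to share the same size. The fixed-point computation and the Möbius inversion are routine by comparison.
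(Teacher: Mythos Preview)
The paper does not supply its own proof of this proposition; it is quoted verbatim as a result of Diaconis, Evans and Graham~\cite{Diaconis14} and used only as motivation for studying the joint distribution of Eulerian and succession statistics. So there is no in-paper argument to compare against.

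Your proof is correct. The reduction to the superset counts $a(I)$ and $b(I)$ via M\"obius inversion on the Boolean lattice is the natural move, and both computations go through as you describe. For $b(I)$ the count $(n-|I|)!$ is immediate. For $a(I)$, the point you flag as delicate is in fact fine: once each position-block is forced to carry an interval of values of the same length, the images of the blocks are pairwise disjoint intervals whose union is $[n]$, and any partition of $[n]$ into intervals is automatically ``stacked'' (the block containing $1$ comes first, the block containing the next unused value comes second, and so on). This gives a canonical linear order on the blocks, and conversely every linear order on the $n-|I|$ blocks yields exactly one admissible $\pi$, so $a(I)=(n-|I|)!$ as claimed. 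The worry about blocks of equal size is moot, since the bijection is determined by the minimum value in each block's image, not by the sizes.

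For context, the argument in~\cite{Diaconis14} is essentially the same inclusion--exclusion (they phrase the succession side via the standard ``collapse each forced run to a single symbol'' bijection with $\mathfrak{S}_{n-|I|}$), so your approach matches the original source even though the present paper does not reproduce it.
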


A {\it fixed point} of $\pi\in\msn$ is an index $k\in [n]$ such that $\pi(k)=k$. Let $\fix(\pi)$ be the number of fixed points of $\pi$.
There is a wealth of literature on the joint distributions of Eulerian and fixed point statistics, see e.g.~\cite{Brenti90,Clarke97,Foata08,Zeng16}.
Motivated by Proposition~\ref{PropDiaconis}, we shall study the joint distribution of Eulerian and succession statistics
from the viewpoint of the decomposition of the ascent statistic, i.e., write $\asc$ as a sum of the numbers of successions and big ascents.
\subsection{Main results}
\hspace*{\parindent}

The number of {\it big ascents} of $\pi\in\msn$ is defined by
\begin{align*}
\basc(\pi)&=\#\{i\in [n-1]:~\pi(i+1)\geqslant \pi(i)+2\}.
\end{align*}
Note that $\asc(\pi)=\suc(\pi)+\basc(\pi)$.
Consider the {\it trivariate Eulerian polynomials}
\begin{equation}\label{AnxysDef}
A_n(x,y,s)=\sum_{\pi\in\msn}x^{\basc(\pi)}y^{\des(\pi)}s^{\suc(\pi)}.
\end{equation}
Below are the polynomials $A_n(x,y,s)$ for $0\leqslant n\leqslant 5$:
\begin{align*}
A_0(x,y,s)&=A_1(x,y,s)=1,~
A_2(x,y,s)=s+y,\\
A_3(x,y,s)&=(s+y)^2+2xy,~
A_4(x,y,s)=(s+y)^3+6xy(s+y)+2xy(x+y),\\
A_5(x,y,s)&=(s+y)^4+12xy(s+y)^2+8xy(s+y)(x+y)+2xy(x+y)^2+16x^2y^2.
\end{align*}
Note that $A_n(x)=A_n(x,1,x)=A_n(1,x,1)$.
The main result of this section is the following.
\begin{theorem}\label{mainthm01}
Let $A_n(x,y,s)$ be the trivariate Eulerian polynomials defined by~\eqref{AnxysDef}.
\begin{itemize}
  \item [$(i)$] We have
\begin{equation}\label{A-EGF}
A(x,y,s;z)=\sum_{n=0}^{\infty}A_{n+1}(x,y,s)\frac{z^n}{n!}=\mathrm{e}^{z(y+s)}\left(\frac{y-x}{y\mathrm{e}^{xz}-x\mathrm{e}^{yz}}\right)^2;
\end{equation}
  \item [$(ii)$] For any $n\geqslant 0$, we have
  \begin{equation}\label{Anxys-gamma}
A_{n+1}(x,y,s)=\sum_{i=0}^n(s+y)^i\sum_{j=0}^{\lrf{(n-i)/2}}\gamma_{n,i,j}(2xy)^j(x+y)^{n-i-2j},
\end{equation}
where the numbers $\gamma_{n,i,j}$ satisfy the recurrence relation
\begin{equation}\label{gammanij-recu}
\gamma_{n+1,i,j}=\gamma_{n,i-1,j}+(1+i)\gamma_{n,i+1,j-1}+j\gamma_{n,i,j}+(n-i-2j+2)\gamma_{n,i,j-1},
\end{equation}
with the initial conditions $\gamma_{0,0,0}=1$ and $\gamma_{0,i,j}=0$ for $(i,j)\neq (0,0)$;
  \item [$(iii)$] The number $\gamma_{n,i,j}$ equals the
number of 0-1-2 increasing rooted forests on $\{0,1,\ldots,n\}$ with
$i+j$ leaves, among which $i$ leaves are children of the root, where a 0-1-2 increasing rooted forest on $\{0,1,2,\ldots,n\}$ is a
tree on $\{0,1,\ldots,n\}$ with the restriction that the root 0 has only children with degree 0 or 1,
and the degree of any other vertex is at most two.
\end{itemize}
\end{theorem}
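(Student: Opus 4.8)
The plan is to route all three parts through a single context-free grammar, in the spirit of the change-of-grammars arguments recalled above. First I would look for a grammar that generates $A_n(x,y,s)$. Because $\asc=\suc+\basc$ and because the three recorded statistics behave uniformly under insertion of the largest letter, the natural candidate is $G=\{x\rightarrow xy,\ y\rightarrow xy,\ s\rightarrow xy\}$, augmented by a seed letter $r$ with $r\rightarrow r(y+s)$, and I would prove
\begin{equation*}
D_G^{n}(r)\big|_{r=1}=A_{n+1}(x,y,s)\qquad (n\geqslant 0).
\end{equation*}
The mechanism is a grammatical labeling: label a permutation so that inserting its largest value acts exactly by $G$, with $x,y,s$ marking the sites that upon insertion become big ascents, descents and successions respectively. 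The step I expect to be the main obstacle is precisely the succession statistic, since a succession is defined through consecutive \emph{values} rather than positions and so is less transparent under insertion of the largest letter; an alternative route to the same identity is to combine Roselle's generating function~\eqref{EGF1} with the block decomposition of maximal runs of successions. The values $D_G(r)|_{r=1}=s+y$ and $D_G^2(r)|_{r=1}=(s+y)^2+2xy$ agree with $A_2$ and $A_3$, a reassuring check.

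Granting the grammar, part $(i)$ reduces to an integration. The generating function $A(x,y,s;z)$ of~\eqref{A-EGF} is the solution $r(z)$ of the autonomous system $x'=y'=s'=xy$, $r'=r(y+s)$ with initial data $x(0)=x$, $y(0)=y$, $s(0)=s$, $r(0)=1$. Since $x-y$ and $x-s$ are first integrals, the system collapses to a single separable equation for $x(z)$, whose solution is $x(z)=\frac{(x-y)x}{x-ye^{(x-y)z}}$. Substituting $y+s=2x(z)+(y+s-2x)$ into $r(z)=\exp\int_0^z (y+s)\,dw$ then produces both the prefactor $e^{z(y+s)}$ and the square $\bigl(\frac{y-x}{ye^{xz}-xe^{yz}}\bigr)^2$, the exponent $2$ coming from the coefficient of $x(z)$.

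For part $(ii)$ I would perform the change of variables $t=y+s$, $u=2xy$, $v=x+y$. A direct computation shows that it carries $G$ to
\begin{equation*}
G^{\ast}=\{r\rightarrow rt,\ t\rightarrow u,\ u\rightarrow uv,\ v\rightarrow u\},
\end{equation*}
so that $D_G^{n}(r)=D_{G^{\ast}}^{n}(r)$; setting $r=1$, $t=y+s$, $u=2xy$, $v=x+y$ yields the expansion~\eqref{Anxys-gamma}. The recurrence~\eqref{gammanij-recu} then falls out of applying $D_{G^{\ast}}$ to the generic monomial $r\,t^{i}u^{j}v^{\,n-i-2j}$: the four resulting terms $r\,t^{i+1}u^{j}v^{\,n-i-2j}$, $i\,r\,t^{i-1}u^{j+1}v^{\,n-i-2j}$, $j\,r\,t^{i}u^{j}v^{\,n-i-2j+1}$ and $(n-i-2j)\,r\,t^{i}u^{j+1}v^{\,n-i-2j-1}$ correspond, after reindexing, to the four summands of~\eqref{gammanij-recu}.

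Part $(iii)$ is a grammatical labeling of the $0$-$1$-$2$ increasing rooted forests that matches $G^{\ast}$. I would make the root the unique always-active site carrying $r$, label a leaf that is a child of the root by $t$, any other leaf by $u$, a degree-$1$ vertex that is not a child of the root by $v$, and assign weight $1$ to every degree-$2$ vertex and to every degree-$1$ child of the root. Inserting the new vertex $n+1$ at the four admissible sites — the root itself, a root-child leaf, any other leaf, or a non-root degree-$1$ vertex — reproduces the four rules of $G^{\ast}$ exactly, the root condition forbidding a second child at a degree-$1$ child of the root. Hence $D_{G^{\ast}}^{n}(r)=r\sum_{i,j}\gamma_{n,i,j}\,t^{i}u^{j}v^{\,n-i-2j}$ enumerates these forests by the number $i$ of root-child leaves and the number $j$ of remaining leaves, which is the asserted interpretation and reproves~\eqref{gammanij-recu}. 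Apart from Step~1, the delicate bookkeeping here is to check that degree-$2$ vertices and degree-$1$ children of the root are both transparent while the count of root-child leaves still produces the coefficient $1+i$.
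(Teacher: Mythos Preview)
Your overall architecture is exactly the paper's: establish a grammar for $A_{n+1}(x,y,s)$, change variables to $t=s+y$, $u=2xy$, $v=x+y$ to obtain your $G^{\ast}$ (the paper's $G_6$), read off~\eqref{Anxys-gamma} and~\eqref{gammanij-recu}, and then label the $0$-$1$-$2$ increasing rooted forests precisely as you describe. Parts~$(ii)$ and~$(iii)$ match the paper's argument essentially verbatim. For part~$(i)$ you integrate the autonomous ODE system directly, whereas the paper extracts the PDE
\[
\partial_z A=(s+y)A+xy(\partial_x+\partial_y+\partial_s)A
\]
from the grammar and verifies the closed form; both routes are valid and your first-integral/separable computation is correct.

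The one genuine gap is the step you yourself flag: the proof that your base grammar generates $A_{n+1}(x,y,s)$. Your proposed labeling ``$x,y,s$ mark the sites that upon insertion become big ascents, descents and successions'' does not work as stated, because there is exactly one position whose insertion creates a succession (right after the current maximum $n-1$), so that label cannot simultaneously record $\suc(\pi)$. The paper's resolution is to split your seed $r$ into two independent letters, $L$ placed at the front and $M$ placed immediately after the current maximum, with rules $L\rightarrow Ly$ and $M\rightarrow Ms$; then $D(LM)=LM(y+s)$, recovering your rule $r\rightarrow r(y+s)$ formally, while the labeling now has $x,y,s$ marking the \emph{existing} big ascents, descents and successions of $\pi$. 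Inserting $n$ at the $M$-slot (i.e., right after $n-1$) visibly creates a succession and relabels that slot $s$, which is exactly $M\rightarrow Ms$; inserting $n$ at the front shifts the old $M$-slot to a descent, accounting for $L\rightarrow Ly$. This two-marker trick is the missing idea that makes the succession statistic compatible with insertion of the largest letter; once you have it, the rest of your plan goes through unchanged.
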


The reader is referred to Fig.~\ref{fig:simple} for an example of a 0-1-2 increasing rooted forest, where the root 0 has three 0-1-2 subtrees.
Setting $y=1$ in~\eqref{Anxys-gamma}, we see that
$$\sum_{\pi\in\ms_{n+1}}x^{\basc(\pi)}s^{\suc(\pi)}=\sum_{i=0}^n(1+s)^i\sum_{j=0}^{\lrf{(n-i)/2}}\gamma_{n,i,j}(2x)^j(1+x)^{n-i-2j},$$
and it reduces to~\eqref{Anx-gamma} when $s=x$.

We say that $\pi$ is a {\it derangement} if it has no fixed points.
Let $\mdn_n$ be the set of derangements in $\msn$.
The {\it derangement polynomials} are defined by $d_n(x)=\sum_{\pi\in\mdn_n}x^{\exc(\pi)}$.
The generating function of $d_n(x)$ is given as follows (see~\cite[Proposition~6]{Brenti90}):
\begin{equation}\label{dxz-EGF}
d(x,z)=\sum_{n=0}^\infty d_n(x)\frac{z^n}{n!}=\frac{1-x}{\mathrm{e}^{xz}-x\mathrm{e}^{z}}.
\end{equation}
Comparing~\eqref{EGF1} with~\eqref{dxz-EGF}, we see that $P^*_n(x)=d_n(x)$.
A {\it anti-excedance} of $\pi\in\msn$ is an index $i\in[n-1]$ such that $\pi(i)<i$.
Let $\aexc(\pi)$ denote the number of anti-excedances of $\pi$.
Clearly, $\exc(\pi)+\aexc(\pi)+\fix(\pi)=n$.
Define
$$C_n(x,y,s)=\sum_{\pi\in\msn}x^{\exc(\pi)}y^{\aexc(\pi)}s^{\fix(\pi)}.$$
In particular, $d_n(x)=C_n(x,1,0)$ and $A_n(x)=C_n(x,1,1)$.
It is well known (see~\cite{Foata70,Zeng16}) that
\begin{equation}\label{dxys-EGF}
C(x,y,s;z)=\sum_{n=0}^{\infty}C_n(x,y,s)\frac{z^n}{n!}=\frac{(y-x)\mathrm{e}^{sz}}{y\mathrm{e}^{xz}-x\mathrm{e}^{yz}}.
\end{equation}
Let $A_n(x,y)$ be the bivariate Eulerian polynomials defined by
\begin{equation}\label{Anxy02}
\sum_{n=0}^\infty{A}_n(x,y)\frac{z^n}{n!}=\frac{(y-x)\mathrm{e}^{yz}}{y\mathrm{e}^{xz}-x\mathrm{e}^{yz}},
\end{equation}
which can also be defined by $${A}_n(x,y)=\sum_{\pi\in\msn}x^{\asc(\pi)}y^{\des(\pi)+1},~{A}_0(x,y)=1.$$
Combining~\eqref{A-EGF},~\eqref{dxys-EGF} and~\eqref{Anxy02}, we get the following result.
\begin{corollary}
For $n\geqslant 0$, we have
\begin{equation*}
A_{n+1}(x,y,s)=\sum_{i=0}^n\binom{n}{i}{A}_i(x,y)C_{n-i}(x,y,s),
\end{equation*}
In particular, $$A_{n+1}(x,1,0)=\sum_{i=0}^n\binom{n}{i}A_i(x)d_{n-i}(x),~A_{n+1}(x,1,1)=\sum_{i=0}^n\binom{n}{i}A_i(x)A_{n-i}(x).$$
\end{corollary}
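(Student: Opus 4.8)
The plan is to prove the identity at the level of exponential generating functions, exploiting the standard fact that a binomial convolution $\sum_{i=0}^n\binom{n}{i}a_ib_{n-i}$ is exactly the coefficient sequence of a product of two EGFs. First I would assemble the three generating functions already in hand: the EGF $A(x,y,s;z)$ of the shifted sequence $A_{n+1}(x,y,s)$ from~\eqref{A-EGF}, the EGF of $C_n(x,y,s)$ from~\eqref{dxys-EGF}, and the EGF of $A_n(x,y)$ from~\eqref{Anxy02}. The entire content of the corollary is that the first factors as the product of the other two.

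The key computation is to multiply the EGFs from~\eqref{Anxy02} and~\eqref{dxys-EGF}. Writing $\Delta=y\mathrm{e}^{xz}-x\mathrm{e}^{yz}$ for the common denominator, these are $(y-x)\mathrm{e}^{yz}/\Delta$ and $(y-x)\mathrm{e}^{sz}/\Delta$ respectively, so their product is
\begin{equation*}
\frac{(y-x)\mathrm{e}^{yz}}{\Delta}\cdot\frac{(y-x)\mathrm{e}^{sz}}{\Delta}=\mathrm{e}^{z(y+s)}\left(\frac{y-x}{\Delta}\right)^2,
\end{equation*}
which is precisely $A(x,y,s;z)$ as given in~\eqref{A-EGF}. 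Extracting the coefficient of $z^n/n!$ from both sides, and using that this coefficient in a product of EGFs is the binomial convolution of the coefficient sequences, yields $A_{n+1}(x,y,s)=\sum_{i=0}^n\binom{n}{i}A_i(x,y)C_{n-i}(x,y,s)$.

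For the two displayed special cases I would simply specialize the variables. Setting $y=1$ in the combinatorial definition below~\eqref{Anxy02} gives $A_i(x,1)=\sum_{\pi}x^{\asc(\pi)}=A_i(x)$, while $C_{n-i}(x,1,0)=d_{n-i}(x)$ and $C_{n-i}(x,1,1)=A_{n-i}(x)$ were recorded just above. Substituting $(y,s)=(1,0)$ and $(y,s)=(1,1)$ into the main identity then produces the two stated convolutions. Since every ingredient is already established, there is no genuine obstacle; the only step deserving a moment of care is the index bookkeeping, namely that the shift is absorbed into the definition of $A(x,y,s;z)$, so that the coefficient of $z^n/n!$ on the left reads off $A_{n+1}$ rather than $A_n$.
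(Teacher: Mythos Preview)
Your proposal is correct and is exactly the approach the paper takes: the paper simply says ``Combining~\eqref{A-EGF},~\eqref{dxys-EGF} and~\eqref{Anxy02}, we get the following result,'' and you have filled in precisely this computation. The only content is the observation that the product of the two EGFs in~\eqref{dxys-EGF} and~\eqref{Anxy02} equals the right-hand side of~\eqref{A-EGF}, which you carry out cleanly.
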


Let $\gamma_{n,i,j}$ be the numbers defined by~\eqref{gammanij-recu}.
Now define
$$\gamma_n(x,y)=\sum_{i=0}^n\sum_{j=0}^{\lrf{(n-i)/2}}\gamma_{n,i,j}x^iy^j,~\gamma(x,y;z)=\sum_{n=0}^\infty \gamma_n(x,y) \frac{z^n}{n!}.$$
We end this subsection by giving the following result.
\begin{proposition}
We have
\begin{equation}\label{gammaxyz}
\gamma(x,y;z)=e^{z(x-1)}\left(\frac{\sqrt{2y-1}\sec\left(\frac{z}{2}\sqrt{2y-1}\right)}
{\sqrt{2y-1}-\tan\left(\frac{z}{2}\sqrt{2y-1}\right)}\right)^2.
\end{equation}
\end{proposition}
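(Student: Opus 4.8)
The plan is to encode the recurrence \eqref{gammanij-recu} as a linear partial differential equation satisfied by $\gamma(x,y;z)$, and then to verify that the right-hand side of \eqref{gammaxyz} is its unique solution. First I would multiply \eqref{gammanij-recu} by $x^iy^j$, sum over $i,j$, and shift indices ($i\mapsto i\pm 1$, $j\mapsto j-1$). Writing $\gamma_n(x,y)=\sum_{i,j}\gamma_{n,i,j}x^iy^j$ and using $\partial_x\gamma_n=\sum_{i,j} i\gamma_{n,i,j}x^{i-1}y^j$ together with $y\partial_y\gamma_n=\sum_{i,j} j\gamma_{n,i,j}x^iy^j$, the four terms collapse to
\[ \gamma_{n+1}(x,y) = (x+ny)\gamma_n + y(1-x)\partial_x\gamma_n + y(1-2y)\partial_y\gamma_n. \]
Multiplying by $z^n/n!$ and summing over $n$, the factor $n$ in $ny\gamma_n$ becomes $yz\,\partial_z\gamma$, so that
\[ (1-yz)\partial_z\gamma = x\gamma + y(1-x)\partial_x\gamma + y(1-2y)\partial_y\gamma, \qquad \gamma(x,y;0)=1, \]
the initial condition coming from $\gamma_0(x,y)=\gamma_{0,0,0}=1$.

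This equation is merely the generating-function form of \eqref{gammanij-recu}, which recovers each $\gamma_{n+1}$ from $\gamma_n$; hence the solution with $\gamma(x,y;0)=1$ is unique as a formal power series in $z$ with coefficients in $\mathbb{Q}[x,y]$, and it suffices to check that the candidate \eqref{gammaxyz} solves the displayed PDE and has constant term $1$. To strip off the $x$-dependence I would substitute $\gamma=e^{z(x-1)}\Phi$. A direct computation shows that the explicit $x\gamma$ term together with the cross-terms produced by differentiating $e^{z(x-1)}$ combine so that the coefficient of $\Phi$ simplifies to $1$, leaving
\[ (1-yz)\partial_z\Phi = \Phi + y(1-x)\partial_x\Phi + y(1-2y)\partial_y\Phi. \]
Since the candidate $\Phi$ in \eqref{gammaxyz} does not involve $x$, the middle term vanishes and it remains to verify $(1-yz)\partial_z\Phi=\Phi+y(1-2y)\partial_y\Phi$ with $\Phi(y,0)=1$.

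The bulk of the work is this last verification, which I would organize through logarithmic differentiation after the substitution $a=\sqrt{2y-1}$ and $\theta=\frac{az}{2}$, so that $\Phi=a^2\sec^2\theta/(a-\tan\theta)^2$ and $\partial_y=\frac1a\partial_a$. Writing $T=\tan\theta$, $S=\sec\theta$ and using $S'=ST$, $T'=S^2$ and $S^2-T^2=1$, one finds
\[ \frac{\partial_z\Phi}{\Phi}=a\,\frac{aT+1}{a-T}, \]
and an analogous expression for $\partial_y\Phi/\Phi$. Substituting $y(1-2y)=-\tfrac12 a^2(a^2+1)$ and $1-yz=\frac{a-(a^2+1)\theta}{a}$, the identity to be checked reduces, after clearing the factor $a-T$, to
\[ \bigl[a-(a^2+1)\theta\bigr](aT+1) = (a-T) - (a^2+1)\bigl[\theta(aT+1)-T\bigr], \]
and both sides equal $a+a^2T-(a^2+1)\theta(aT+1)$. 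The initial condition is immediate: $\theta=0$ gives $T=0$, $S=1$, so $\Phi(y,0)=1$ and hence $\gamma(x,y;0)=1$. The main obstacle is not any single conceptual step but keeping this verification under control; the substitution $a=\sqrt{2y-1}$, $\theta=\frac{az}{2}$ combined with logarithmic differentiation is what collapses it to the one-line polynomial identity above instead of an unwieldy direct differentiation of \eqref{gammaxyz}.
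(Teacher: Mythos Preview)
Your proposal is correct and follows essentially the same approach as the paper: derive the recurrence $\gamma_{n+1}=(x+ny)\gamma_n+y(1-x)\partial_x\gamma_n+y(1-2y)\partial_y\gamma_n$, convert it to the PDE $(1-yz)\partial_z\gamma=x\gamma+y(1-x)\partial_x\gamma+y(1-2y)\partial_y\gamma$ with $\gamma(x,y;0)=1$, and then verify that the candidate \eqref{gammaxyz} solves it. Where the paper simply asserts that this verification is ``routine,'' you actually carry it out via the factorization $\gamma=e^{z(x-1)}\Phi$ and the substitution $a=\sqrt{2y-1}$, $\theta=az/2$, reducing everything to a one-line polynomial identity; this is a genuine improvement in exposition, but the underlying strategy is identical.
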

\begin{proof}
Multiplying both sides of~\eqref{gammanij-recu} by $x^iy^j$ and summing over all $i,j$, we obtain
\begin{equation*}\label{recugamma}
\gamma_{n+1}(x,y)=(x+ny)\gamma_n(x,y)+y(1-x)\frac{\partial}{\partial x}\gamma_n(x,y)+y(1-2y)\frac{\partial}{\partial y}\gamma_n(x,y),
\end{equation*}
which can be rewritten as
\begin{equation}\label{EGFgamma}
(1-yz)\frac{\partial}{\partial x}\gamma(x,y;z)=x\gamma(x,y;z)+y(1-x)\gamma(x,y;z)+y(1-2y)\gamma(x,y;z).
\end{equation}
It is routine to check that the generating function
$$\widehat{\gamma}(x,y;z)=e^{z(x-1)}\left(\frac{\sqrt{2y-1}\sec\left(\frac{z}{2}\sqrt{2y-1}\right)}
{\sqrt{2y-1}-\tan\left(\frac{z}{2}\sqrt{2y-1}\right)}\right)^2$$
satisfies~\eqref{EGFgamma}. Note that this generating function gives $\widehat{\gamma}(x,y;0)=1$ and $\widehat{\gamma}(x,0;0)=\mathrm{e}^{xz}$.
Hence $\gamma(x,y;z)=\widehat{\gamma}(x,y;z)$.
\end{proof}

When $x=1$, the explicit formula~\eqref{gammaxyz} reduces to the exponential generating function of the descent polynomials of simsun permutations,
which was obtained by Chow and Shiu~\cite[Theorem~1]{Chow11}. Let $E_n(u,v)$ be the Andr\'e polynomials defined by~\eqref{Enuv}.
It should be noted that $$\gamma(1,y;z)=1+\sum_{n=1}^\infty \frac{1}{y}E_n(y,1)\frac{z^n}{n!},$$
and an equivalent explicit formula of $\gamma(1,y;z)$ has been obtained by
Foata and Sch\"utzenberger~\cite{Foata73} as well as Foata and Han~\cite[Section~7]{Foata01}.
\subsection{A key Lemma}
\hspace*{\parindent}

\begin{lemma}\label{lemmaLM}
If
\begin{equation}\label{LMxys-G}
G_5=\{L\rightarrow Ly,M\rightarrow Ms,s\rightarrow xy,x\rightarrow xy,y\rightarrow xy\},
\end{equation}
then we have
\begin{equation}\label{derangment-grammarLMA}
D_{G_5}^n(LM)=LMA_{n+1}(x,y,s).
\end{equation}
\end{lemma}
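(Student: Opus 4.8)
\noindent\emph{Proof proposal.} The plan is to prove \eqref{derangment-grammarLMA} by the grammatical labeling method together with induction on $n$, building the permutations of $[n+1]$ by successively inserting the largest letter. Concretely, to each $\pi\in\msnn$ I attach the weight $w(\pi)=LM\,x^{\basc(\pi)}y^{\des(\pi)}s^{\suc(\pi)}$, so that $\sum_{\pi\in\msnn}w(\pi)=LM\,A_{n+1}(x,y,s)$, and I aim to show that one application of $D_{G_5}$ carries $\sum_{\pi\in\msnn}w(\pi)$ to $\sum_{\sigma\in\ms_{n+2}}w(\sigma)$. The base case is $n=0$, where $\msnn=\{1\}$ receives the weight $LM=D_{G_5}^0(LM)$ and $A_1=1$. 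For the inductive step I use that every $\sigma\in\ms_{n+2}$ is obtained from a unique $\pi\in\msnn$ by inserting the new maximum $n+2$ into one of the $n+2$ gaps of $\pi$ (equivalently, $\pi$ is recovered from $\sigma$ by deleting $n+2$); thus it suffices to prove the local identity $\sum_{\sigma}w(\sigma)=D_{G_5}(w(\pi))$, where $\sigma$ ranges over the children of $\pi$.

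To match the five letters of $G_5$ I label the gaps of $\pi$ as follows: the gap before $\pi(1)$ carries the letter $L$; the gap immediately to the right of the entry $n+1$ carries the letter $M$; and every interior gap sitting in a pair $(\pi(i),\pi(i+1))$ is labeled $x$, $y$ or $s$ according as this pair is a big ascent, a descent or a succession, with the remaining end-gap labeled so that the product of all gap-labels equals $w(\pi)$. Two elementary observations drive the computation. First, a succession can only be created by placing $n+2$ immediately after the current maximum $n+1$, since $n+2$ exceeds every other letter by at least two; that single gap is exactly the one carrying $M$, and inserting there produces the succession $(n+1)(n+2)$ while the slot to the right of $n+2$ becomes the new $M$-gap, realizing $M\rightarrow Ms$. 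Second, inserting $n+2$ into any interior gap whose left neighbor is not $n+1$ always splits that gap into one big ascent followed by one descent, independently of the gap's original type; since $x,y,s$ share the single production $\rightarrow xy$, this realizes each of $x\rightarrow xy$, $y\rightarrow xy$, $s\rightarrow xy$ at once. Inserting at the left end creates one descent and leaves the left boundary in place, giving $L\rightarrow Ly$.

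Carrying this out, the children of $\pi$ produce the monomials $x^{a}y^{b}s^{c+1}$, $x^{a}y^{b+1}s^{c}$, $x^{a+1}y^{b}s^{c}$ and $x^{a+1}y^{b+1}s^{c-1}$, where $(a,b,c)=(\basc(\pi),\des(\pi),\suc(\pi))$, and one checks that these occur with multiplicities $1$, $1+a$, $b$ and $c$, which is precisely the expansion of $D_{G_5}(w(\pi))/(LM)$ coming from the product rule. I expect the one genuinely delicate point to be the multiplicity $b$ of $x^{a+1}y^{b}s^{c}$: whether $n+1$ sits in the interior or at the right end changes whether the $M$-gap is an interior descent or the right boundary, so the ``$M$-token'' effectively moves under insertions that do not occur at $M$. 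The resolution, and the crux of the argument, is a cancellation: when $n+1$ is interior there are $b-1$ interior descents other than the $M$-gap together with one big-ascent-creating insertion at the right end, whereas when $n+1$ is last there are $b$ interior descents and no separate right-end insertion, so in both cases the total is exactly $b$ and the dependence on the position of the maximum disappears. Once this bookkeeping is verified the induction closes, yielding $D_{G_5}^n(LM)=LM\,A_{n+1}(x,y,s)$; as a consistency check one may specialize $y=1$, $s=x$ to recover the Eulerian polynomial through $A_{n+1}(x,1,x)=A_{n+1}(x)$.
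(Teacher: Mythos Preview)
Your proposal is correct and follows essentially the same grammatical-labeling approach as the paper: both place $L$ at the front, $M$ immediately after the current maximum, and $x/y/s$ on the remaining gaps according to type (with $y$ at the right end when the maximum is not last), and then verify that inserting the new largest letter at each labeled gap realizes the corresponding rule of $G_5$. Your ``delicate point'' about the multiplicity $b$ is exactly the paper's labeling rule~($v$), namely that the rightmost gap carries a $y$ whenever the maximum is not in the last position.
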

\begin{proof}
Here we introduce a grammatical labeling of $\pi=\pi(1)\pi(2)\cdots\pi(n)\in \msn$ as follows:
\begin{itemize}
\item [\rm ($i$)]Put a superscript label $L$ at the front of $\pi$;
\item [\rm ($ii$)]Put a superscript label $M$ right after the maximum entry $n$;
  \item [\rm ($iii$)]If $i$ is a big ascent, then put a superscript label $x$ right after $\pi(i)$;
 \item [\rm ($iv$)]If $i$ is a descent and $\pi(i)\neq n$, then put a superscript label $y$ right after $\pi(i)$;
  \item [\rm ($v$)]If $\pi(n)\neq n$, then put a superscript label $y$ at the end of $\pi$;
\item [\rm ($vi$)]If $i$ is a succession, then put a superscript label $s$ right after $\pi(i)$.
\end{itemize}
The weight of $\pi$ is defined to be the product of its labels.
Thus the weight of $\pi$ is given by $$w(\pi)=LMx^{\basc(\pi)}y^{\des(\pi)}s^{\suc(\pi)}.$$
Note that $\ms_1=\{^L1^M\}$ and $\ms_2=\{^L1^s2^M, ^L2^M1^y\}$. Note that $D_{G_5}(LM)=LM(s+y)$.
Hence the weight of the element in $\ms_1$ is $LM$ and the sum of weights of the elements in $\ms_2$ is given by $D_{G_5}(LM)$.
Suppose we get all labeled permutations in $\pi\in\ms_{n-1}$, where $n\geqslant 2$. Let
$\widehat{{\pi}}$ be obtained from $\pi\in\ms_{n-1}$ by inserting the entry $n$.
There are six cases to label $n$ and relabel some elements of $\pi$.
The changes of labeling are illustrated as follows:
$$ ^L\pi(1)\cdots (n-1)^M\cdots   \mapsto  ^Ln^M\pi(1)\cdots (n-1)^y\cdots ;$$
$$ ^L\pi(1)\cdots (n-1)^M\cdots   \mapsto ^L\pi(1)\cdots (n-1)^sn^M\cdots;$$
$$ \cdots\pi(i)^x\cdots (n-1)^M\cdots   \mapsto \cdots\pi(i)^xn^M\cdots (n-1)^y\cdots ;$$
$$ \cdots\pi(i)^y\pi(i+1)\cdots(n-1)^M\cdots   \mapsto \cdots\pi(i)^xn^M\pi(i+1)\cdots(n-1)^y\cdots  ;$$
$$ \cdots (n-1)^M\cdots\pi(n-1)^y   \mapsto  \cdots (n-1)^y\cdots\pi(n-1)^xn^M  ;$$
$$ \cdots \pi(i)^s\pi(i+1)\cdots(n-1)^M\cdots   \mapsto \cdots \pi(i)^xn^M\pi(i+1)\cdots (n-1)^y\cdots.$$
In each case, the insertion of $n$ corresponds to one substitution rule in $G_5$. By induction,
it is routine to check that the action of the formal derivative $D_{G_5}$ on the set of weights of permutations in $\ms_{n-1}$ gives the set of weights of permutations in
$\ms_{n}$. This completes the proof of~\eqref{derangment-grammarLMA}.
\end{proof}
\subsection{Proof of Theorem~\ref{mainthm01}}
\hspace*{\parindent}

\quad (A) Let $G_5$ be the grammar given by~\eqref{LMxys-G}.
From~\ref{derangment-grammarLMA}, we see that there exist nonnegative integers $a_{n,i,j}$ such that
$D_{G_5}^n(LM)=LM\sum_{i,j=0}^na_{n,i,j}x^iy^js^{n-i-j}$.
Then we have
\begin{align*}
&D_{G_5}\left(D_{G_5}^n(LM)\right)\\
&=LM\sum_{i,j=0}^na_{n,i,j}\left(x^iy^{j+1}s^{n-i-j}+x^iy^js^{n+1-i-j}\right)+\\
&LM\sum_{i,j=0}^na_{n,i,j}\left(ix^iy^{j+1}s^{n-i-j}+jx^{i+1}y^{j}s^{n-i-j}+(n-i-j)x^{i+1}y^{j+1}s^{n-1-i-j}\right).
\end{align*}
Comparing the coefficients of $LMx^iy^js^{n+1-i-j}$ in both sides of the above expression, we get
\begin{equation}\label{anij-recu}
a_{n+1,i,j}=a_{n,i,j}+(1+i)a_{n,i,j-1}+ja_{n,i-1,j}+(n-i-j+2)a_{n,i-1,j-1}.
\end{equation}
Multiplying both sides of~\eqref{anij-recu} by $x^iy^js^{n+1-i-j}$ and summing over all $i,j$, we obtain
\begin{equation}\label{Anxys-recu}
A_{n+2}(x,y,s)=(s+y)A_{n+1}(x,y,s)+xy\left(\frac{\partial}{\partial x}+\frac{\partial}{\partial y}+\frac{\partial}{\partial s}\right)A_{n+1}(x,y,s).
\end{equation}
By rewriting~\eqref{Anxys-recu} in terms of generating function $A:=A(x,y,s;z)$, we have
\begin{equation}\label{recu-Anxys02}
\frac{\partial}{\partial z}A=(s+y)A+xy\left(\frac{\partial}{\partial x}+\frac{\partial}{\partial y}+\frac{\partial}{\partial s}\right)A.
\end{equation}
It is routine to check that the generating function
$$\widetilde{A}=\mathrm{e}^{z(y+s)}\left(\frac{y-x}{y\mathrm{e}^{xz}-x\mathrm{e}^{yz}}\right)^2$$
satisfies~\eqref{recu-Anxys02}. Moreover, $\widetilde{A}(0,0,0;z)=1,\widetilde{A}(x,0,s;z)=\mathrm{e}^{sz}$ and $\widetilde{A}(0,y,s;z)=\mathrm{e}^{z(y+s)}$. The proof of $A=\widetilde{A}$ follows.

\quad (B)
Setting
$u=2xy,v=x+y,t=s+y$ and $I=LM$,
we get
$D_{G_5}(u)=uv,D_{G_5}(v)=u,D_{G_5}(t)=u$ and $D_{G_5}(I)=It$. Thus we get a new grammar
\begin{equation}\label{JAcobi-gram02}
G_6=\{I\rightarrow It,t\rightarrow u, u\rightarrow uv,v\rightarrow u\}.
\end{equation}
Note that $D_{G_6}(I)=It,~D_{G_6}^2(I)=I(t^2+u)$ and $D_{G_6}^3(I)=I(t^3+3tu+uv)$.
Then by induction, it is easy to verify that there exist nonnegative integers $\gamma_{n,i,j}$ such that
\begin{equation}\label{DG101}
D_{G_6}^n(I)=I\sum_{i=0}^nt^i\sum_{j=0}^{\lrf{(n-i)/2}}\gamma_{n,i,j}u^jv^{n-i-2j}.
\end{equation}
Then upon taking $u=2xy,v=x+y,t=s+y$ and $I=LM$, we get~\eqref{Anxys-gamma}.
In particular, $\gamma_{0,0,0}=1$ and $\gamma_{0,i,j}=0$ if $(i,j)\neq (0,0)$.
Since $D_{G_6}^{n+1}(I)=D_{G_6}\left(D_{G_6}^n(I)\right)$,
we obtain
\begin{align*}
D_{G_6}\left(D_{G_6}^n(I)\right)&
=I\sum_{i,j}\gamma_{n,i,j}\left(t^{i+1}u^jv^{n-i-2j}+it^{i-1}u^{j+1}v^{n-i-2j}\right)+\\
&I\sum_{i,j}\gamma_{n,i,j}\left(jt^iu^jv^{n+1-i-2j}+(n-i-2j)t^iu^{j+1}v^{n-1-i-2j}\right).
\end{align*}
Comparing the coefficients of $t^iu^jv^{n+1-i-2j}$ in both sides of the above expansion, we get~\eqref{gammanij-recu}.

\quad (C) The combinatorial interpretation of $\gamma_{n,i,j}$ can be found by using the following grammatical labeling.
Given a 0-1-2 increasing rooted forest $T$, the root 0 is labeled by $I$. For the children of the root,
each child with degree 0 (a leaf of the root) is labeled by $t$ and each child with degree 1 is labeled by 1.
For the other vertices (not the children of the root), each leaf is labeled by $u$, each vertex with degree 1 is labeled by $v$ and each vertex of degree 2 is labeled by 1.
See Fig.~\ref{fig:simple} for an example, where
the grammatical labels are in parentheses.
\begin{figure}[t]\label{fig:simple}
\centering

\caption{The labeling of a 0-1-2 increasing rooted forest on $\{0,1,2,\ldots,8\}$. \label{fig:simple}}
{
    \begin{tikzpicture}[->,>=stealth', level/.style={sibling distance = 5cm/#1, level distance = 2.0cm}, scale=0.6,transform shape]
    \node [treenode] {$0$~$(I)$}
    child
    {
        node [treenode] {$1$~$(1)$}
        child
            {
                node [treenode] {$7$~$(u)$}
            }}
    child
    {
        node [treenode] {$2$~(1)}
        child
        {
            node [treenode] {$3$~$(1)$}
            child
            {
                node [treenode] {$5$~$(u)$}
            }
            child
           {
                node [treenode] {$6$~$(v)$}
            child
            {
                node [treenode] {$8$~$(u)$}
            }}
        }
    }
    child[edge from parent path ={(\tikzparentnode.-30) -- (\tikzchildnode.north)}]
    {
        node [treenode,yshift=0.4cm] (b) {}       
    }
;
\node[align=center,yshift=0.1cm] at (b) {$4$~$(t)$};
\end{tikzpicture}
}
\end{figure}
Let $T$ be the 0-1-2 increasing rooted forest given in Fig.~\ref{fig:simple}.
Then there are four cases to be considered:
\begin{itemize}
\item [\rm ($i$)]If we add 9 as a child of the root 0, then the vertex 9 becomes a leaf of the root with label $t$.
This corresponds to the substitution rule $I\rightarrow It$;
\item [\rm ($ii$)]If we add 9 as a child of the vertex 4, the label $t$ of 4 becomes 1, and the vertex 9 gets the label $u$.
This corresponds to the substitution rule $t\rightarrow u$;
  \item [\rm ($iii$)]If we add 9 as a child of the vertex 5 (resp.~7, 8), the label $u$ of 5 (resp.~7, 8) becomes $v$, and the vertex 9 gets the label $u$.
This corresponds to the substitution rule $u\rightarrow uv$;
 \item [\rm ($iv$)]If we add 9 as a child of the vertex 6, the label $v$ of 6 becomes 1, and the vertex 9 gets the label $u$.
This corresponds to the substitution rule $v\rightarrow u$.
\end{itemize}

The aforementioned four cases exhaust all the cases to construct a 0-1-2 increasing rooted forest $T'$ on $\{0,1,2,\ldots,n,n+1\}$ from a 0-1-2 increasing rooted forest $T$ on
$\{0,1,2,\ldots,n\}$ by adding $n+1$ as a leaf.
Since $D_{G_6}^n(I)$ equals the sum of the weights
of 0-1-2 increasing rooted forests on $\{0,1,2,\ldots,n\}$, the coefficient $\gamma_{n,i,j}$ equals the number of 0-1-2 increasing rooted forest $T$ on $\{0,1,2,\ldots,n\}$ with $i+j$ leaves, among which $i$ leaves are the children of the root 0. This completes the proof.
\section{The multivariate $k$-th order Eulerian polynomials}\label{Section3}
\subsection{Preliminary}
\hspace*{\parindent}

The {\it second-order Eulerian polynomials} are defined by
$$C_n(x)=(1-x)^{2n+1}\sum_{k=0}^\infty \Stirling{n+k}{k}x^k.$$
In order to find a combinatorial interpretation of the coefficients of $C_n(x)$ in terms of
descents of permutations, Gessel and Stanley~\cite{Gessel78} introduced
Stirling permutations.
A {\it Stirling permutation} of order $n$ is a permutation of $\{1,1,2,2,\ldots,n,n\}$ such that
for each $i$, $1\leqslant i\leqslant n$, all entries between the two occurrences of $i$ are larger than $i$.
Denote by $\mqn$ the set of {\it Stirling permutations} of order $n$.
Let $\sigma=\sigma_1\sigma_2\cdots\sigma_{2n}\in\mqn$. In this paper, we always set $\sigma_0=\sigma_{2n+1}=0$.
Following~\cite{Bona08,Gessel78}, for $0\leqslant i\leqslant 2n$, we say that an index $i$ is a {\it descent} (resp.~{\it ascent}, {\it plateau}) of $\sigma$ if
$\sigma_{i}>\sigma_{i+1}$ (resp. $\sigma_{i}<\sigma_{i+1}$, $\sigma_{i}=\sigma_{i+1}$).
Let $\des(\sigma),\asc(\sigma)$ and $\plat(\sigma)$ be the number of descents, ascents and plateaux of $\sigma$, respectively.
According to~\cite[Theorem~2.1]{Gessel78}, one has
$$C_n(x)=\sum_{\sigma\in\mqn}x^{\des(\sigma)}=\sum_{j=1}^nC_{n,j}x^j.$$
where $C_{n,j}$ is called the {\it second-order Eulerian number}.
Below are $C_n(x)$ for $n\leqslant 5$:
\begin{align*}
C_1(x)&=x,~C_2(x)=x+2x^2,~
C_3(x)=x+8x^2+6x^3,\\
C_4(x)&=x+22x^2+58x^3+24x^4,~
C_5(x)=x+52x^2+328x^3+444x^4+120x^5.
\end{align*}

The {\it trivariate second-order Eulerian polynomials} are defined as follows:
$$C_n(x,y,z)=\sum_{\sigma\in\mqn}x^{\asc{(\sigma)}}y^{\des(\sigma)}z^{\plat{(\sigma)}}.$$
It is now well known that
\begin{equation}\label{Dumont80}
C_{n+1}(x,y,z)=xyz\left(\frac{\partial}{\partial x}+\frac{\partial}{\partial y}+\frac{\partial}{\partial z}\right)C_n(x,y,z),~C_0(x,y,z)=1.
\end{equation}
As pointed out by Chen and Fu~\cite{Chen22}, the recursion~\eqref{Dumont80} first appeared in the work of Dumont~\cite[p.~317]{Dumont80}, which implies that
$C_n(x,y,z)$ is symmetric in the variables $x,y$ and $z$.
The symmetry of $C_n(x,y,z)$ was rediscovered by Janson~\cite[Theorem~2.1]{Janson08} by constructing an urn model.
In~\cite{Haglund12}, Haglund and Visontai introduced a refinement of the polynomial $C_n(x,y,z)$ by indexing each ascent, descent and plateau by the value where they appear.

Let $G_7$ be the following grammar
\begin{equation}\label{eq001}
G_7=\{x \rightarrow xyz, y\rightarrow xyz, z\rightarrow xyz\}.
\end{equation}
It has been shown by Dumont~\cite{Dumont80}, Chen etal.~\cite{Chen2102} and Ma etal.~\cite{Ma19} that
$$D_{G_7}^n(x)=C_n(x,y,z).$$

The following definition will be used in the following discussion.
\begin{definition}
A 0-1-2-$\cdots$-k increasing plane tree on $[n]$ is an increasing plane tree with each
vertex with at most $k$ children.
\end{definition}

Using the change of variables~\eqref{grammars02}, we see that
$D_{G_7}(u)=3w,~D_{{G_7}}(v)=2uw,~D_{G_7}(w)=vw$, which yields a new grammar
\begin{equation}\label{G8def}
G_8=\{u\rightarrow 3w, v\rightarrow 2uw,~w\rightarrow vw\}.
\end{equation}

Chen and Fu~\cite{Chen22} gave an interpretation of the grammar $G_8$ and obtained the following result.
\begin{theorem}[{\cite{Chen22}}]\label{thm01}
For $n\geqslant 1$, one has
$$C_n(x,y,z)=\sum_{k\geqslant 1}(xyz)^k\sum_{j\geqslant 0}\gamma_{n,k,j}(xy+yz+zx)^{j}(x+y+z)^{2n+1-2j-3k},$$
where the coefficient $\gamma_{n,k,j}$ equals the number of 0-1-2-3 increasing plane trees
on $[n]$ with $k$ leaves, $j$ degree one vertices and $i$ degree two vertices.
\end{theorem}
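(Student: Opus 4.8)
The plan is to follow the grammatical-labeling method already used for Theorem~\ref{mainthm01}, now applied to the grammar $G_8$ of~\eqref{G8def}. The starting point is the identity $D_{G_7}^n(x)=C_n(x,y,z)$ recorded after~\eqref{eq001}. Since $D_{G_7}(x)=xyz=w$ is a symmetric function, for $n\geqslant 1$ one may write $C_n(x,y,z)=D_{G_7}^{n-1}(w)$; and because $D_{G_7}(u),D_{G_7}(v),D_{G_7}(w)$ are all symmetric, repeated application of $D_{G_7}$ never leaves the ring of symmetric functions in $x,y,z$. Hence, under the change of variables~\eqref{grammars02}, the operator $D_{G_7}$ acts on this ring exactly as $D_{G_8}$ acts on polynomials in $u,v,w$, so that
\begin{equation*}
C_n(x,y,z)=D_{G_8}^{n-1}(w)\qquad(n\geqslant 1),
\end{equation*}
now to be computed entirely inside $\mathbb{Q}[u,v,w]$. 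The whole theorem thus reduces to expanding $D_{G_8}^{n-1}(w)$ and reading off a tree interpretation of its coefficients.

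Next I would attach a grammatical labeling to the 0-1-2-3 increasing plane trees on $[n]$: each leaf is labeled $w$, each degree-one vertex is labeled $v$, each degree-two vertex is labeled $u$, and each degree-three vertex is labeled $1$; the weight of a tree is the product of its vertex labels. The base case is $D_{G_8}^{0}(w)=w$, matching the unique tree on $[1]$ (a single root, which is a leaf of weight $w$). For the inductive step one inserts the vertex $n+1$ as a new leaf, and there are exactly four possibilities, each matching a rule of $G_8=\{u\to 3w,\,v\to 2uw,\,w\to vw\}$: attaching $n+1$ below a leaf turns its weight $w$ into $v$ and creates a new leaf of weight $w$, giving $w\to vw$; attaching it below a degree-one vertex can be done in $2$ plane positions and turns $v$ into $u$, giving $v\to 2uw$; attaching it below a degree-two vertex can be done in $3$ plane positions and turns $u$ into $1$, giving $u\to 3w$; and a degree-three vertex is already saturated, consistent with its inert label $1$. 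Thus $D_{G_8}$ applied to the weight of a tree on $[n]$ produces exactly the sum of weights of all trees on $[n+1]$ obtained by adjoining $n+1$, so by induction $D_{G_8}^{n-1}(w)$ equals the sum of $w^{\ell(T)}v^{d_1(T)}u^{d_2(T)}$ over all 0-1-2-3 increasing plane trees $T$ on $[n]$, where $\ell,d_1,d_2$ count leaves and degree-one and degree-two vertices.

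Finally I would collect terms according to $(k,j)=(\ell(T),d_1(T))$. Counting the vertices and edges of a tree on $[n]$, namely $\ell+d_1+d_2+d_3=n$ and $d_1+2d_2+3d_3=n-1$, forces $d_2=2n+1-2j-3k$, which is precisely the exponent of $u$ in the claimed formula (and confirms homogeneity of degree $2n+1$). Hence $\gamma_{n,k,j}$ equals the number of such trees with $k$ leaves and $j$ degree-one vertices, the number of degree-two vertices being automatically $2n+1-2j-3k$. The main obstacle is the inductive step of the second paragraph: one must verify that the plane-tree structure produces exactly the multiplicities $2$ and $3$ coming from the available insertion slots, that the degree cap at $3$ is respected (so that no fifth case arises), and that every tree on $[n+1]$ is obtained exactly once. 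The reduction in the first paragraph and the vertex/edge bookkeeping in the third are routine once this labeling correspondence is pinned down.
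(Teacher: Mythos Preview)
Your argument is correct and is precisely the grammatical-labeling proof of Chen--Fu; note, however, that the present paper does not itself prove this theorem but merely quotes it from~\cite{Chen22}, and then proves the $(k+1)$-variable generalization (Theorem~\ref{mainthm}) by the very same mechanism you outline, with $e_{k+1},e_k,\ldots$ in place of $w,v,u$. Your proof is thus the $k=2$ specialization of the paper's own argument for Theorem~\ref{mainthm}, and the labeling, the insertion multiplicities $1,2,3$, and the vertex/edge count forcing $d_2=2n+1-2j-3k$ are all exactly as in that proof.
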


\begin{corollary}
For $n\geqslant 1$, one has
$$C_n(x)=\sum_{k\geqslant 1}x^k\sum_{j\geqslant 0}\gamma_{n,k,j}(1+2x)^{j}(2+x)^{2n+1-2j-3k}.$$
\end{corollary}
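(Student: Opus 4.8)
The plan is to derive this corollary directly from Theorem~\ref{thm01} by a single specialization of the three variables, so that no new combinatorial argument is required. First I would recall that the univariate descent polynomial arises as a specialization of the trivariate polynomial
$$C_n(x,y,z)=\sum_{\sigma\in\mqn}x^{\asc(\sigma)}y^{\des(\sigma)}z^{\plat(\sigma)}.$$
Indeed, setting the ascent- and plateau-variables to $1$ and keeping $x$ on the descent slot gives $C_n(1,x,1)=\sum_{\sigma\in\mqn}x^{\des(\sigma)}=C_n(x)$ straight from the definition. Equivalently, since $C_n(x,y,z)$ is symmetric in its three variables (as recorded right after~\eqref{Dumont80}), one may instead substitute $y=z=1$ and read off $C_n(x)=C_n(x,1,1)$; this is the form I would actually plug in, as it keeps $x$ live and avoids a variable-name clash with the statement of Theorem~\ref{thm01}.

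Next I would substitute $y=z=1$ into the expansion
$$C_n(x,y,z)=\sum_{k\geqslant 1}(xyz)^k\sum_{j\geqslant 0}\gamma_{n,k,j}(xy+yz+zx)^{j}(x+y+z)^{2n+1-2j-3k}.$$
Under this specialization the three symmetric expressions collapse to
$$xyz=x,\qquad xy+yz+zx=x+1+x=1+2x,\qquad x+y+z=x+2=2+x.$$
Consequently $(xyz)^k$ becomes $x^k$, the factor $(xy+yz+zx)^j$ becomes $(1+2x)^j$, and $(x+y+z)^{2n+1-2j-3k}$ becomes $(2+x)^{2n+1-2j-3k}$, which is precisely the claimed identity. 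The index ranges $k\geqslant 1$ and $j\geqslant 0$ and the exponent $2n+1-2j-3k$ are all inherited verbatim from Theorem~\ref{thm01}, so nothing further needs to be checked there.

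Since the identity is obtained by a formal substitution into an already-proven theorem, there is essentially no obstacle to overcome; the only point that requires a word of care is confirming that the chosen specialization genuinely reproduces $C_n(x)=\sum_{\sigma\in\mqn}x^{\des(\sigma)}$. If one substitutes into the descent slot directly this is immediate from the definition, while the cleaner substitution $y=z=1$ produces the ascent enumerator and one then appeals to the symmetry of $C_n(x,y,z)$ to identify it with the descent enumerator; either route yields the same computation. Finally, the coefficients $\gamma_{n,k,j}$ retain their combinatorial meaning from Theorem~\ref{thm01}, namely the number of 0-1-2-3 increasing plane trees on $[n]$ with $k$ leaves and $j$ vertices of degree one, so the corollary carries the corresponding tree interpretation with no additional work.
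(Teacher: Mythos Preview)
Your proposal is correct and matches the paper's intent: the corollary is stated immediately after Theorem~\ref{thm01} without proof precisely because it is the one-line specialization $y=z=1$ (together with the symmetry of $C_n(x,y,z)$ in its three variables) that you carry out. There is nothing to add.
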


In the next section, we shall consider a decomposition of the statistic $\plat$ of $k$-Stirling permutations, i.e., write $\plat$ as a sum of the numbers of $j$-plateaux.
\subsection{A key Lemma}
\hspace*{\parindent}

Let $k$ be a given positive integer, and let $j^k$ denote $k$ times of the letter $j$. A $k$-Stirling permutation of order $n$ is a multiset permutation of $\{1^k,2^k,\ldots,n^k\}$
with the property that all elements between two occurrences of $i$ are at least $i$, where $i\in [n]$.
Let $\mqn(k)$ be the set of $k$-Stirling permutations of order $n$. Clearly, $\mqn(1)=\msn$ and $\mqn(2)=\mqn$.
Following~\cite[p.~657]{Stanley2011}, an {\it $k$-ary tree} $T$ is
either empty, or else one specially designated vertex is called the root of $T$ and
the remaining vertices (excluding the root) are put into a (weak) ordered partition
$(T_1,\ldots,T_k)$ of exactly $k$ disjoint (possibly empty) sets $T_1,\ldots, T_k$, each of which is an
$k$-ary tree. A bijection between $\mqn(k)$ and the set of
$(k+1)$-ary increasing trees was independently established by Gessel~\cite{Park1994} and Janson-Kuba~\cite[Theorem~1]{Janson11}.

Let $\sigma\in\mqn(k)$.
The ascents, descents and plateaux of $\sigma$ of are defined as before, where we always set $\sigma_0=\sigma_{kn+1}=0$.
More precisely, an index $i$ is called an ascent (resp. descent, plateau) of $\sigma$ if $\sigma_i<\sigma_{i+1}$ (resp. $\sigma_i>\sigma_{i+1}$, $\sigma_i=\sigma_{i+1}$).
It is clear that $\asc(\sigma)+\des(\sigma)+\plat(\sigma)=kn+1$.
As a natural refinement of ascents, descents and plateaux, Janson and Kuba~{\cite{Janson11} introduced the following definition, and
related the distribution of $j$-ascents, $j$-descents and $j$-plateaux in $k$-Stirling permutations
with certain parameters in $(k+1)$-ary increasing trees.
\begin{definition}[{\cite{Janson11}}]\label{Lemma-Stirling}
An index $i$ is called a $j$-plateau (resp.~$j$-descent,~$j$-ascent) if $i$ is a plateau (resp.~descent,~ascent) and there are exactly $j-1$ indices $\ell<i$ such that
$a_{\ell}=a_i$.
\end{definition}
Let $\operatorname{plat}_j(\sigma)$ be the number of $j$-plateaux of $\sigma$. For $\sigma\in\mqn(k)$, it is clear that
$\operatorname{plat}_j(\sigma)\leqslant k-1$.
\begin{example}
Consider the $4$-Stirling permutation $\sigma=111223333221$.
The set of $1$-plateaux is given by $\{1,4,6\}$, the set of $2$-plateaux is given by $\{2,7\}$, and the set of $3$-plateaux is given by $\{8,10\}$.
Thus $\operatorname{plat}_1(\sigma)=3$ and $\operatorname{plat}_2(\sigma)=\operatorname{plat}_3(\sigma)=2$.
\end{example}

The {\it multivariate $k$-th order Eulerian polynomials} $C_n(x_1,\ldots,x_{k+1})$ are defined by
$$C_n(x_1,x_2,\ldots,x_{k+1})=\sum_{\sigma\in\mqn(k)}{x_1}^{\operatorname{plat}_1(\sigma)}{x_2}^{\operatorname{plat}_2(\sigma)}\cdots {x_{k-1}}^{\operatorname{plat}_{k-1}(\sigma)}{x_{k}}^{\operatorname{des}(\sigma)}{x_{k+1}}^{\operatorname{asc}(\sigma)}.$$
In particular, when $x_1=z,~x_2=\cdots=x_{k-1}=0$, $x_k=y$ and $x_{k+1}=x$, the polynomials $C_n(x_1,x_2,\ldots,x_{k+1})$ reduce to
$C_n(x,y,z)$; when $x_1=x_2=\cdots=x_{k-1}=0$, $x_k=1$ and $x_{k+1}=x$, the polynomials $C_n(x_1,x_2,\ldots,x_{k+1})$ reduce to
$A_n(x)$.

In the following discussion,
we always let $\XX_{k+1}=\{x_1,x_2,\ldots,x_{k+1}\}$ and let $e_i$ be the $i$-th
elementary symmetric function associated with $\XX_{k+1}$. In particular, $$e_0=1,~e_1=x_1+x_2+\cdots+x_{k+1},~e_k=\sum_{i=1}^k\frac{e_{k+1}}{x_i},~e_{k+1}=x_1x_2\cdots x_{k+1}.$$
The following lemma is fundamental.
\begin{lemma}\label{Stirling}
Let
$G_9=\{x_1 \rightarrow e_{k+1},~ x_2\rightarrow e_{k+1},\ldots,~ x_{k+1}\rightarrow e_{k+1}\}$,
where $e_{k+1}=x_1x_2\cdots x_{k+1}$.
For $n\geqslant 1$, one has $$D_{G_9}^n(x_1)=C_n(x_1,x_2,\ldots,x_{k+1}).$$
\end{lemma}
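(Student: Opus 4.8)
The plan is to establish a grammatical labeling on $(k+1)$-ary increasing trees and then prove, by induction on $n$, that the formal derivative $D_{G_9}$ exactly mirrors the operation of inserting a new largest label into a $k$-Stirling permutation. The key structural fact is the bijection between $\mqn(k)$ and $(k+1)$-ary increasing trees due to Gessel and Janson--Kuba, which I would use to transport statistics from permutations to trees, but in fact the cleanest route works directly on the permutations themselves, so I would argue at the level of $\mqn(k)$.

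First I would introduce a grammatical labeling of $\sigma=\sigma_1\sigma_2\cdots\sigma_{kn}\in\mqn(k)$ consistent with $G_9$: to each of the $kn+1$ gaps (positions between consecutive entries, including the two boundary gaps flanked by $\sigma_0=\sigma_{kn+1}=0$) I assign a letter from $\{x_1,\dots,x_{k+1}\}$ recording what kind of index it is. Precisely, a gap that is a $j$-plateau gets the label $x_j$ for $1\leqslant j\leqslant k-1$, a descent gap gets $x_k$, and an ascent gap gets $x_{k+1}$. Since $\asc(\sigma)+\des(\sigma)+\plat(\sigma)=kn+1$ and the plateaux split as $\operatorname{plat}_1+\cdots+\operatorname{plat}_{k-1}$, the product of all gap labels is exactly the monomial $x_1^{\operatorname{plat}_1(\sigma)}\cdots x_{k-1}^{\operatorname{plat}_{k-1}(\sigma)}x_k^{\operatorname{des}(\sigma)}x_{k+1}^{\operatorname{asc}(\sigma)}$, i.e. the weight $w(\sigma)$ contributing to $C_n(x_1,\dots,x_{k+1})$. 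I would verify the base case $n=1$, where the single permutation $1^k$ has gap sequence recording one ascent, one descent, and $k-1$ plateaux of types $1,\dots,k-1$, matching $D_{G_9}(x_1)=e_{k+1}=x_1x_2\cdots x_{k+1}$.

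The inductive heart is to show that inserting the block $(n+1)^k$ into some $\sigma\in\mqn(k)$ realizes the substitution $x_i\rightarrow e_{k+1}$. The crucial combinatorial observation is that a valid $k$-Stirling permutation of order $n+1$ arises uniquely by choosing one gap of $\sigma$ and inserting all $k$ copies of $n+1$ consecutively there (since $n+1$ is largest, the Stirling condition is automatic, and any interleaving would violate it). Inserting the block into a gap labeled $x_i$ destroys that gap and creates $k+1$ new gaps: the new internal $k-1$ gaps inside the block are plateaux of types $1,\dots,k-1$ (reading left to right), and the two gaps bounding the block become an ascent on the left and a descent on the right relative to the surrounding entries. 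Thus one old label $x_i$ is replaced by the product $x_1x_2\cdots x_{k+1}=e_{k+1}$ of the $k+1$ fresh labels, which is precisely the rule $x_i\rightarrow e_{k+1}$. Summing over all gaps and all $\sigma$ shows $\sum_{\tau\in\mathcal Q_{n+1}(k)}w(\tau)=D_{G_9}\bigl(\sum_{\sigma\in\mqn(k)}w(\sigma)\bigr)$, completing the induction.

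The step I expect to be the main obstacle is the careful bookkeeping of how the $j$-indices of \emph{existing} entries are affected, and confirming they are \emph{not}. One must check that inserting the maximal block changes only the single chosen gap and the newly created gaps, leaving the type (ascent, descent, or $j$-plateau for each $j$) of every other pre-existing gap intact; in particular the counting index $j$ in Definition~\ref{Lemma-Stirling}, which depends on how many earlier occurrences of a value precede a given position, is unaffected because the inserted value $n+1$ is new and its copies introduce no prior occurrences of any old value. I would make this precise by noting that for any old index, the set of earlier positions sharing its value is unchanged, so its $j$-label is preserved. Once this invariance is confirmed, the match between the six-case-style verification and the single uniform rule $x_i\rightarrow e_{k+1}$ is immediate, and the lemma follows.
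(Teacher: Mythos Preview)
Your proposal is correct and follows essentially the same approach as the paper: assign the grammatical labels $x_1,\dots,x_{k+1}$ to the $kn+1$ gaps of $\sigma\in\mqn(k)$ according to whether each is a $j$-plateau, a descent, or an ascent, and then check that inserting the maximal block $(n+1)^k$ into a gap realizes the rule $x_i\rightarrow e_{k+1}$. Your additional verification that the $j$-indices of the surviving plateaux are unaffected by the insertion is a point the paper leaves implicit, so your write-up is, if anything, slightly more careful on that detail.
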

\begin{proof}
We shall show
that the grammar $G_9$ can be used to generate $k$-Stirling permutations.
We first introduce a grammatical labeling of $\sigma\in \mqn(k)$ as follows:
\begin{itemize}
  \item [\rm ($L_1$)]If $i$ is an ascent, then put a superscript label $x_{k+1}$ right after $\sigma_i$;
 \item [\rm ($L_2$)]If $i$ is a descent, then put a superscript label $x_k$ right after $\sigma_i$;
\item [\rm ($L_3$)]If $i$ is a $j$-plateau, then put a superscript label $x_j$ right after $\sigma_i$.
\end{itemize}
The weight of $\sigma$ is
defined as the product of the labels, that is $$w(\sigma)={x_1}^{\operatorname{plat}_1(\sigma)}{x_2}^{\operatorname{plat}_2(\sigma)}\cdots {x_{k-1}}^{\operatorname{plat}_{k-1}(\sigma)}{x_{k}}^{\operatorname{des}(\sigma)}{x_{k+1}}^{\operatorname{asc}(\sigma)}.$$
Recall that we always set $\sigma_0=\sigma_{kn+1}=0$.
Thus the index $0$ is always an ascent and the index $kn$ is always a descent.
Thus $\mq_1(k)=\{^{x_{k+1}}1^{x_1}1^{x_2}1^{x_3}\cdots 1^{x_k}\}$. The are $k+1$ elements in $\mq_2(k)$ and they can be labeled as follows, respectively:
$$^{x_{k+1}}1^{x_1}1^{x_2}\cdots 1^{x_{k-1}}1^{x_{k+1}}2^{x_1}2^{x_2}\cdots 2^{x_{k-1}}2^{x_k},$$
$$^{x_{k+1}}1^{x_1}1^{x_2}\cdots 1^{x_{k-2}}1^{x_{k+1}}2^{x_1}2^{x_2}\cdots 2^{x_{k-1}}2^{x_k}1^{x_k},$$
$$\cdots$$
$$^{x_{k+1}}2^{x_1}2^{x_2}\cdots2^{x_{k-1}}2^{x_k}1^{x_1}1^{x_2}\cdots 1^{x_{k-1}}1^{x_k}.$$
Note that $D_{G_9}(x_1)=e_{k+1}$ and $D_{G_9}^2(x_1)=e_ke_{k+1}$.
Then the weight of the element in $\mq_1(k)$ is given by $D_{G_9}(x_1)$, and the sum of weights of the elements in $\mq_2(k)$ is given by $D_{G_9}^2(x)$.
Hence the result holds for $n=1,2$.
We proceed by induction on $n$.
Suppose we get all labeled permutations in $\mq_{n-1}(k)$, where $n\geqslant 3$. Let
$\sigma'$ be obtained from $\sigma\in \mq_{n-1}(k)$ by inserting the string $nn\cdots n$ with length $k$.
Then the changes of labeling are illustrated as follows:
$$\cdots\sigma_i^{x_j}\sigma_{i+1}\cdots\mapsto \cdots\sigma_i^{x_{k+1}}n^{x_1}n^{x_2}\cdots n^{x_k}\sigma_{i+1}\cdots;$$
$$\sigma^{x_k}\mapsto \sigma^{x_{k+1}}n^{x_1}n^{x_2}\cdots n^{x_k};~~\quad ^{x_{k+1}}\sigma \mapsto ^{x_{k+1}}n^{x_1}n^{x_2}\cdots n^{x_k}\sigma.$$
In each case, the insertion of the string $nn\cdots n$ corresponds to one substitution rule in $G_9$.
Then the action of $D_{G_9}$ on the set of weights of all elements in $\mq_{n-1}(k)$ gives the set of weights of all elements in $\mq_n(k)$.
Therefore, we get a grammatical interpretation of $C_n(x_1,x_2,\ldots,x_{k+1})$, and this completes the proof.
\end{proof}

From the symmetry of the grammar $G_9$ and $D_{G_9}(x_1)=e_{k+1}$, we get the
following result.
\begin{corollary}\label{corJan}
The multivariate polynomials $C_n(x_1,x_2,\ldots,x_{k+1})$ are symmetric, i.e., the variables are exchangeable.
\end{corollary}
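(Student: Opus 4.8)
The plan is to exploit the complete symmetry of the grammar $G_9$ in the letters $x_1, x_2, \ldots, x_{k+1}$: each letter is sent to the single symmetric monomial $e_{k+1} = x_1 x_2 \cdots x_{k+1}$. Let $\tau$ be any permutation of $\{1, 2, \ldots, k+1\}$ and let it act on $\mathbb{Q}[x_1, \ldots, x_{k+1}]$ as the ring automorphism fixing $\mathbb{Q}$ and sending $x_i \mapsto x_{\tau(i)}$. The first step is to show that $D_{G_9}$ commutes with this action, i.e.\ $\tau \circ D_{G_9} = D_{G_9} \circ \tau$. Since both sides are determined by their values on the generators $x_i$ together with the derivation rule, it suffices to check $\tau(D_{G_9}(x_i)) = D_{G_9}(\tau(x_i))$. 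The left-hand side equals $\tau(e_{k+1}) = e_{k+1}$ because $e_{k+1}$ is symmetric, while the right-hand side equals $D_{G_9}(x_{\tau(i)}) = e_{k+1}$; the two agree. A short induction on the number of factors in a monomial, using that $\tau$ is an algebra homomorphism and $D_{G_9}$ a derivation, upgrades this to all of $\mathbb{Q}[x_1, \ldots, x_{k+1}]$.

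Granting the commutation relation, I would iterate it to obtain $\tau \circ D_{G_9}^n = D_{G_9}^n \circ \tau$ for every $n$. The second step records the elementary identity $D_{G_9}^n(x_1) = D_{G_9}^{n-1}(e_{k+1})$ for $n \geqslant 1$, which holds because $D_{G_9}(x_1) = e_{k+1}$. Since $e_{k+1}$ is itself symmetric, applying a permutation gives
\[
\tau\!\left(C_n(x_1, \ldots, x_{k+1})\right) = \tau\!\left(D_{G_9}^{n-1}(e_{k+1})\right) = D_{G_9}^{n-1}\!\left(\tau(e_{k+1})\right) = D_{G_9}^{n-1}(e_{k+1}) = C_n(x_1, \ldots, x_{k+1}),
\]
using Lemma~\ref{Stirling} for the first and last equalities. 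As $\tau$ was arbitrary, $C_n$ is invariant under every permutation of its variables, which is exactly the asserted symmetry.

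I expect no genuine obstacle here: the statement is a formal consequence of the grammar being invariant under relabeling of letters, and the whole argument is the observation that a \emph{symmetric} derivation preserves symmetric polynomials. The only point requiring a little care is the inductive verification that $D_{G_9}$ commutes with the permutation action on arbitrary products rather than merely on the generators; this is where the derivation rule $D_{G_9}(uv) = D_{G_9}(u)v + uD_{G_9}(v)$ and the multiplicativity of $\tau$ must be combined, but it is entirely routine.
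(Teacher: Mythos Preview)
Your proposal is correct and follows essentially the same approach as the paper: the paper derives the corollary in one line, ``From the symmetry of the grammar $G_9$ and $D_{G_9}(x_1)=e_{k+1}$,'' and your argument is precisely the careful unpacking of that sentence---the commutation of $D_{G_9}$ with variable permutations together with the step $D_{G_9}^n(x_1)=D_{G_9}^{n-1}(e_{k+1})$.
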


By combining an urn model for the exterior leaves of $(k+1)$-ary increasing trees and a bijection between $(k+1)$-ary increasing trees and $k$-Stirling permutations,
Janson, Kuba and Panholzer~\cite[Theorem~2,~Theorem~8]{Janson11} found that the variables in $C_n(x_1,x_2,\ldots,x_{k+1})$ are exchangeable. Therefore, in an equivalent form, Corollary~\ref{corJan} was first obtained in~\cite{Janson11}. It should be noted that in~\cite{Janson11}, there is no explicit connection to the $k$-th order
Eulerian polynomials is brought up.
\subsection{Main results}
\hspace*{\parindent}

\begin{theorem}\label{mainthm}
For $n\geqslant 2$ and $k\geqslant n-2$, we have
\begin{equation}\label{main01}
C_n(x_1,x_2,\ldots,x_{k+1})=\sum \gamma(n;i_1,i_2,\ldots,i_n)e_{k-n+2}^{i_n}e_{k-n+3}^{i_{n-1}}\cdots e_k^{i_2} e_{k+1}^{i_1},
\end{equation}
where the summation is over all sequences $(i_1,i_2,\ldots,i_{n})$ of nonnegative integers such that
$i_1+i_2+\cdots+i_{n}=n$, $1\leqslant i_1\leqslant n-1$, $i_n=0$ or $i_n=1$. When $i_n=1$, one has $i_1=n-1$.
The coefficients $\gamma(n;i_1,i_2,\ldots,i_n)$ equals the number of 0-1-2-$\cdots$-k-(k+1) increasing plane
trees on $[n]$ with $i_j$ degree $j-1$ vertices for all $1\leqslant j\leqslant n$.
\end{theorem}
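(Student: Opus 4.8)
The plan is to mimic the grammatical argument used for Theorem~\ref{mainthm01} and Theorem~\ref{thm01}, but now starting from the symmetric grammar $G_9$ of Lemma~\ref{Stirling} and applying its symmetric transformation. By Lemma~\ref{Stirling} we have $D_{G_9}^n(x_1)=C_n(x_1,\ldots,x_{k+1})$, and by Corollary~\ref{corJan} this polynomial is symmetric in all $k+1$ variables; hence, by the Symmetric transformation of grammar framework, $D_{G_9}^n(x_1)$ can be expressed as a polynomial in the elementary symmetric functions $e_1,\ldots,e_{k+1}$. First I would compute $D_{G_9}(e_\ell)$ for each $\ell$. Since $G_9$ sends every $x_i$ to the common value $e_{k+1}$, the derivative $D_{G_9}$ acts on $e_\ell=\sum_{i_1<\cdots<i_\ell}x_{i_1}\cdots x_{i_\ell}$ by the Leibniz rule, replacing one factor at a time by $e_{k+1}$. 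A short calculation should give a clean rule of the form $D_{G_9}(e_\ell)=c_\ell\, e_{k+1}\, e_{\ell-1}$ for an explicit integer $c_\ell$ (counting how many of the $\ell$ factors can be hit and how the resulting monomials collapse onto $e_{k+1}e_{\ell-1}$); this is the analogue of $G_8$ in~\eqref{G8def} and of $G_6$ in~\eqref{JAcobi-gram02}. This yields a transformed grammar purely in the letters $e_1,\ldots,e_{k+1}$.

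Once the transformed grammar is in hand, I would establish the expansion~\eqref{main01} by induction on $n$, exactly as in parts (B) of the proof of Theorem~\ref{mainthm01} and in Theorem~\ref{thm01}. Starting from $D_{G_9}(x_1)=e_{k+1}$ and iterating $D_{G_9}$, each application raises the total degree and redistributes the elementary-symmetric factors according to the rule $D_{G_9}(e_\ell)=c_\ell\,e_{k+1}e_{\ell-1}$. I would show inductively that $D_{G_9}^n(x_1)$ is supported on monomials $e_{k-n+2}^{i_n}\cdots e_k^{i_2}e_{k+1}^{i_1}$ with $i_1+\cdots+i_n=n$. The constraints $1\leqslant i_1\leqslant n-1$, together with $i_n\in\{0,1\}$ and the implication $i_n=1\Rightarrow i_1=n-1$, should fall out of tracking which indices $\ell$ can appear after $n$ differentiations: the highest-index letter $e_{k+1}$ must always be present (so $i_1\geqslant 1$), and the lowest-index letter $e_{k-n+2}$ can only be reached along the unique extremal chain of differentiations, forcing $i_n\leqslant 1$ and pinning down the companion exponent. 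The degree bookkeeping here is routine but must be done carefully to match the stated index ranges.

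For the combinatorial interpretation I would set up a grammatical labeling of $(k+1)$-ary increasing plane trees on $[n]$, following the leaf/degree labeling used by Chen and Fu for $G_8$ in Theorem~\ref{thm01} and the forest labeling in part~(C) of Theorem~\ref{mainthm01}. Concretely, I would label a vertex of degree $j-1$ by the letter $e_{k-j+2}$ (so a leaf, degree $0$, gets $e_{k+1}$; a degree-$1$ vertex gets $e_k$; and so on up to the maximal degree, which corresponds to $e_{k-n+2}$), and verify that inserting the new vertex $n+1$ as a leaf realizes exactly the substitution rule $D_{G_9}(e_\ell)=c_\ell\,e_{k+1}e_{\ell-1}$: attaching a child to a vertex of degree $j-1$ raises its degree by one, which relabels it from $e_{k-j+2}$ to $e_{k-j+1}$ while creating a new leaf labeled $e_{k+1}$. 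I would then check that the multiplicities $c_\ell$ in the grammar match the number of ways to attach a child in a \emph{plane} tree of the relevant degree, so that the derivative faithfully enumerates these trees; since $D_{G_9}^n(x_1)$ equals the sum of weights of all such trees, reading off the coefficient of the monomial $e_{k-n+2}^{i_n}\cdots e_{k+1}^{i_1}$ gives precisely the count of trees with $i_j$ vertices of degree $j-1$.

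\textbf{The main obstacle} I anticipate is getting the multiplicity constants $c_\ell$ in $D_{G_9}(e_\ell)=c_\ell\,e_{k+1}e_{\ell-1}$ exactly right and reconciling them with the \emph{plane} (ordered-children) structure of the trees. In the symmetric-function computation a factor $x_{i_1}\cdots x_{i_\ell}$ maps, under each of its $\ell$ single-letter substitutions, to a multiple of $e_{k+1}$ times a degree-$(\ell-1)$ monomial, but the collapse onto $e_{\ell-1}$ and the combinatorial count of child-insertions in an ordered tree of a given degree need not produce the same integer automatically; verifying that these two counts coincide—so that the algebraic grammar and the tree model agree—is the delicate point, and it is where I would spend the most care, cross-checking against the $k=2$ case recovered from Theorem~\ref{thm01}.
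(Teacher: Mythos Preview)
Your plan is correct and follows essentially the same route as the paper: the paper computes $D_{G_9}(e_\ell)=(k-\ell+2)\,e_{\ell-1}e_{k+1}$ to obtain the transformed grammar $G_{10}$, then proves~\eqref{main01} by induction on $n$ and interprets the coefficients via the very labeling you propose (a degree-$i$ vertex carries $e_{k-i+1}$). The ``main obstacle'' you flag dissolves immediately, since a vertex of degree $j-1$ carries $e_{k-j+2}$ and the rule gives $e_{k-j+2}\rightarrow j\,e_{k-j+1}e_{k+1}$, with the factor $j$ matching exactly the $j$ ordered positions for inserting a new child in a plane tree.
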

\begin{proof}
Let $G_9$ be the grammar given in Lemma~\ref{Stirling}. We first consider a change of $G_9$.
Note that $D_{G_9}(x_1)=e_{k+1},~D_{G_9}(e_{i})=(k-i+2)e_{i-1}e_{k+1}$ for $1\leqslant i\leqslant k+1$. Thus we get a new grammar
\begin{equation}\label{grammar002}
G_{10}=\{x_1\rightarrow e_{k+1},~e_{i}\rightarrow (k-i+2)e_{i-1}e_{k+1}~\text{for $1\leqslant i\leqslant k+1$}\},
\end{equation}
Note that $G_{10}(x_1)=e_{k+1},~G_{10}^2(x_1)=e_ke_{k+1},~G_{10}^3(x_1)=e_k^2e_{k+1}+2e_{k-1}e_{k+1}^2$.
By induction, we assume that
\begin{equation}\label{DG2x1}
G_{10}^n(x_1)=\sum \gamma(n;i_1,i_2,\ldots,i_n)e_{k-n+2}^{i_n}e_{k-n+3}^{i_{n-1}}\cdots e_k^{i_2} e_{k+1}^{i_1}.
\end{equation}
Note that
\begin{align*}
G_{10}^{n+1}(x_1)&=G_{10}\left(\sum \gamma(n;i_1,i_2,\ldots,i_n)e_{k-n+2}^{i_n}e_{k-n+3}^{i_{n-1}}\cdots e_k^{i_2} e_{k+1}^{i_1}\right)\\
&=\sum ni_n\gamma(n;i_1,i_2,\ldots,i_n)e_{k-n+1}e_{k-n+2}^{i_n-1}e_{k-n+3}^{i_{n-1}}\cdots e_k^{i_2} e_{k+1}^{i_1+1}+\\
&\sum (n-1)i_{n-1}\gamma(n;i_1,i_2,\ldots,i_n)e_{k-n+2}^{i_n+1}e_{k-n+3}^{i_{n-1}-1}\cdots e_k^{i_2} e_{k+1}^{i_1+1}+\cdots+\\
&\sum 2i_2\gamma(n;i_1,i_2,\ldots,i_n)e_{k-n+2}^{i_n}e_{k-n+3}^{i_{n-1}}\cdots e_{k-1}^{i_3+1}e_k^{i_2-1} e_{k+1}^{i_1+1}+\\
&\sum i_1\gamma(n;i_1,i_2,\ldots,i_n)e_{k-n+2}^{i_n}e_{k-n+3}^{i_{n-1}}\cdots e_k^{i_2+1} e_{k+1}^{i_1},
\end{align*}
which yields that the expansion~\eqref{DG2x1} holds for $n+1$. Combining Lemma~\ref{Stirling} and~\eqref{DG2x1}, we get~\eqref{main01}.
By induction, one can easily verify that $i_1+i_2+\cdots+i_n=n$, $1\leqslant i_1\leqslant n-1$, $i_n=1$ or $i_n=0$.

By using~\eqref{grammar002}, the combinatorial interpretation of
the coefficients $\gamma(n;i_1,i_2,\ldots,i_n)$ can be proved along the same lines as the proof of~\cite[Theorem~4.1]{Chen22}.
However, we give a direct proof of it for our purpose.
Let $T$ be a 0-1-2-$\cdots$-k-(k+1) increasing plane tree on $[n]$. The labeling of
$T$ is given by labeling a degree $i$ vertex by $e_{k-i+1}$ for all $0\leqslant i\leqslant k+1$.
In particular, label a leaf by $e_{k+1}$ and label a degree $k+1$ vertex by $1$.
Let $T'$ be a 0-1-2-$\cdots$-k-(k+1)
increasing plane tree on $[n+1]$ by adding $n+1$ to $T$ as a leaf.
We can add $n+1$ to $T$ only as a child of a vertex $v$ that is not of degree $k+1$.
For $1\leqslant i\leqslant k+1$, if the vertex $v$ is a degree $k-i+1$ vertex with label $e_i$,
there are $k-i+2$ cases to attach $n+1$ (from left to right, say). In either case, in $T'$, the vertex $v$ becomes a
degree $k-i+2$ with label $e_{i-1}$ and $n+1$ becomes a leaf with label $e_{k+1}$. Hence
the insertion of $n+1$ corresponds to the substitution rule $e_{i}\rightarrow (k-i+2)e_{i-1}e_{k+1}$.
Therefore, $G_{10}(x_1)$ equals the sum of the weights
of 0-1-2-$\cdots$-(k+1) increasing plane trees on $[n]$, and the combinatorial interpretation of $\gamma(n;i_1,i_2,\ldots,i_n)$ follows. This completes the proof.
\end{proof}

For convenience, we present the expansions of $G_{10}^4(x_1)$ and $G_{10}^5(x_1)$:
\begin{align*}
&G_{10}^4(x_1)=e_k^3e_{k+1}+8e_{k-1}e_ke_{k+1}^2+6e_{k-2}e_{k+1}^3,\\
&G_{10}^5(x_1)=e_k^4e_{k+1}+22e_k^2e_{k-1}e_{k+1}^2+16e_{k-1}^2e_{k+1}^3+42e_{k-2}e_ke_{k+1}^3+24e_{k-3}e_{k+1}^4.
\end{align*}

By using $G_{10}^{n+1}(x_1)=G_{10}\left(G_{10}^{n}(x_1)\right)$, it is routine to verify that
\begin{align*}
&\gamma(n+1;1,n,0\ldots,0)=\gamma(n;1,n-1,0,\ldots,0)=1,\\
&\gamma(n+1;n,0,\ldots,0,1)=n\gamma(n;n-1,0,\ldots,0,1)=n!,\\
&\gamma(n+1;i_1,i_2,\ldots,i_n,0)=i_1\gamma(n;i_1,i_2-1,i_3,\ldots,i_n)+\\
&\sum_{j=2}^{n-1}j(i_j+1)\gamma(n;i_1-1,i_2,\ldots,i_{j-1},i_j+1,i_{j+1}-1,i_{j+2}\ldots,i_n).
\end{align*}

Note that $\gamma(3;2,0,1,0,\ldots,0)=2,~\gamma(4;2,1,1,0,\ldots,0)=8$ and
$$\gamma(n+1;2,n-2,1,0,\ldots,0)=2\gamma(n;2,n-3,1,0,\ldots,0)+2(n-1)\gamma(n;1,n-1,0,\ldots,0).$$
By induction, it is easy to verify that
\begin{equation}\label{gamman3}
\gamma(n;2,n-3,1,0,\ldots,0)=2^n-2n~\text{for $n\geqslant 3$}.
\end{equation}
Recall that the second-order Eulerian numbers $C_{n,j}$ satisfy the recurrence relation
\begin{equation*}\label{secondEu-recu}
C_{n+1,j}=jC_{n,j}+(2n+2-j)C_{n,j-1},
\end{equation*}
with the initial conditions $C_{1,1}=1$ and $C_{1,j}=0$ if $j\neq 1$ (see~\cite{Bona08,Gessel78}). In particular,
\begin{equation*}
C_{n,2}=2^{n+1}-2(n+1).
\end{equation*}
Comparing this with~\eqref{gamman3}, we see that
$\gamma(n;2,n-3,1,0,\ldots,0)=C_{n-1,2}$ for $n\geqslant 3$.
Moreover, $\gamma(n+1;n,0,\ldots,0,1)=C_{n,n}=n!$.
Following Janson~\cite{Janson08}, the number $C_{n,j}$ equals the number of increasing plane trees on $[n+1]$ with $k$ leaves.
So we get the following result.
\begin{corollary}
For $n\geqslant 2$ and $1\leqslant j\leqslant n-1$, we have
$$C_{n-1,j}=\sum_{i_2+i_3+\cdots+i_{n}=n-j} \gamma(n;j,i_2,\ldots,i_{n-1},i_n).$$
\end{corollary}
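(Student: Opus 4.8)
The plan is to prove the identity by recognizing that both sides enumerate the same family: increasing plane trees on $[n]$ with exactly $j$ leaves. The right-hand side is handled by Theorem~\ref{mainthm}, and the left-hand side by the result of Janson quoted just above the statement, so the whole corollary reduces to matching two descriptions of one tree count and requires essentially no computation.

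First I would observe that the hypothesis $k\geqslant n-2$ makes the degree bound in the 0-1-2-$\cdots$-(k+1) increasing plane trees vacuous on $[n]$: any rooted tree on $n$ vertices has maximum out-degree at most $n-1\leqslant k+1$, so every increasing plane tree on $[n]$ is admissible and the coefficient $\gamma(n;i_1,\ldots,i_n)$ is independent of $k$ in this range. By Theorem~\ref{mainthm}, $\gamma(n;i_1,i_2,\ldots,i_n)$ is then the number of increasing plane trees on $[n]$ having $i_1$ leaves (degree-$0$ vertices), $i_2$ vertices of degree $1$, and in general $i_m$ vertices of degree $m-1$. Fixing $i_1=j$ and summing over all $(i_2,\ldots,i_n)$ with $i_2+\cdots+i_n=n-j$ therefore forgets the refined degree statistics and counts exactly the increasing plane trees on $[n]$ with $j$ leaves, which accounts for the right-hand side.

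For the left-hand side I would invoke Janson's interpretation~\cite{Janson08} (as recalled before the statement, with the evident correction that $C_{n,j}$ counts increasing plane trees on $[n+1]$ with $j$ leaves), specialized to order $n-1$; thus $C_{n-1,j}$ equals the number of increasing plane trees on $[(n-1)+1]=[n]$ with $j$ leaves. Comparing the two counts yields the claimed identity. As a consistency check, the leaf count of an increasing plane tree on $[n]$ ranges from $1$ (a path) to $n-1$ (the star rooted at $1$), matching the admissible range $1\leqslant j\leqslant n-1$ and the support of $C_{n-1,j}$. The only real care needed is in reconciling the size conventions of the two cited interpretations---confirming that second-order Eulerian data of ``order $m$'' corresponds to trees on $m+1$ vertices---so that the vertex sets align as $[n]$ on both sides; once this is pinned down the argument is immediate.
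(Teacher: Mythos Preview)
Your argument is correct and is exactly the approach the paper takes: the corollary is stated immediately after recalling Janson's interpretation of $C_{n,j}$ as counting increasing plane trees on $[n+1]$ with $j$ leaves, and the intended proof is precisely the double-count you describe, with Theorem~\ref{mainthm} supplying the refined count on the right-hand side. Your remark that the degree bound $k+1\geqslant n-1$ is vacuous on $[n]$ is the one small point the paper leaves implicit, and you have identified it correctly.
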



\begin{thebibliography}{37}
\bibitem{Athanasiadis17}
C.A. Athanasiadis, \textit{Gamma-positivity in combinatorics and geometry}, S\'em. Lothar. Combin., \textbf{77} (2018), Article B77i.
%

\bibitem{Bona08}
M. B\'ona, \textit{Real zeros and normal distribution for statistics on Stirling permutations defined by Gessel and Stanley}, SIAM J. Discrete Math., \textbf{23} (2008/09), 401--406.

\bibitem{Branden08}
P. Br\"and\'{e}n, \textit{Actions on permutations and unimodality of descent polynomials}, European J. Combin., \textbf{29} (2008), 514--531.

\bibitem{Brenti90}
F. Brenti, \textit{Unimodal polynomials arising from symmetric functions}, Proc. Amer. Math. Soc., \textbf{108} (1990), 1133--1141.

\bibitem{Brenti18}
F. Brenti, M. Marietti, \textit{Fixed points and adjacent ascents for classical complex reflection groups}, Adv. in Appl. Math., \textbf{101} (2018), 168--183.


\bibitem{Chen93}
W.Y.C. Chen, \textit{Context-free grammars, differential operators and formal
power series}, Theoret. Comput. Sci., \textbf{117} (1993), 113--129.



\bibitem{Chen17}
W.Y.C. Chen, A.M. Fu, \textit{Context-free grammars for permutations and increasing trees}, Adv. in Appl. Math., \textbf{82} (2017), 58--82.

\bibitem{Chen22}
W.Y.C. Chen, A.M. Fu, \textit{A context-free grammar for the $e$-positivity of the trivariate second-order Eulerian polynomials},
Discrete Math., \textbf{345}(1) (2022), 112661.

%
\bibitem{Chen2102}
W.Y.C. Chen, R.X.J. Hao and H.R.L.Yang, \textit{Context-free grammars and multivariate stable polynomials over Stirling permutations}, In: V. Pillwein and
C. Schneider (eds.), Algorithmic Combinatorics: Enumerative Combinatorics, Special Functions and Computer Algebra, pp. 109--135, Springer, 2021.


\bibitem{Chow08}
C.-O. Chow, \textit{On certain combinatorial expansions of the Eulerian polynomials}, Adv. in Appl. Math., \textbf{41} (2008), 133--157.


\bibitem{Chow11}
C-O. Chow, W. C. Shiu, \textit{Counting simsun permutations by descents}, Ann. Comb., \textbf{15} (2011), 625--635.

\bibitem{Clarke97}
R.J. Clarke, G.-N. Han, J. Zeng, \textit{A combinatorial interpretation of the Seidel
generation of $q$-derangement numbers}, Ann. Comb., \textbf{4} (1997), 313--327.


\bibitem{Diaconis14}
P. Diaconis, S.N. Evans, R. Graham, \textit{Unseparated pairs and fixed points in random permutations}, Adv. in Appl. Math., \textbf{61} (2014), 102--124.


\bibitem{Dumont80}
D. Dumont, \textit{Une g\'en\'eralisation trivari\'ee sym\'etrique des nombres eul\'eriens},
J. Combin. Theory Ser. A, 28 (1980), 307--320.

\bibitem{Dumont96}
D. Dumont, \textit{Grammaires de William Chen et d\'erivations dans les arbres et
arborescences}, S\'em. Lothar. Combin., \textbf{37}, Art. B37a (1996), 1--21.



\bibitem{Foata01}
D. Foata, G.-N. Han, \textit{Arbres minmax et polyn\^{o}mes d'Andr\'e}, Adv. in Appl. Math., \textbf{27} (2001), 367--389.


\bibitem{Foata08}
D. Foata, G.-N. Han, \textit{Fix-mahonian calculus III; a quadruple distribution}, Monatsh Math., \textbf{154} (2008), 177--197.

\bibitem{Foata70}
D. Foata and M.P. Sch\"utzenberger, \textit{Th\'eorie g\'eometrique des polyn\^{o}mes eul\'eriens}, Lecture Notes in Math., vol. 138, Springer, Berlin, 1970.

\bibitem{Foata73}
D. Foata and M.P. Sch\"utzenberger, \textit{Nombres d'Euler et permutations alternantes}, in: J.N. Srivastava, et al. (Eds.),
A Survey of Combinatorial Theory, North-Holland, Amsterdam, 1973, pp.173--187.


\bibitem{Fu18}
A.M. Fu, \textit{A context-free grammar for peaks and double descents of permutations}, Adv. in Appl. Math., \textbf{100} (2018), 179--196.

\bibitem{Gal05}
S.R. Gal, \textit{Real root conjecture fails for five and higher-dimensional spheres}, Discrete Comput. Geom., \textbf{34} (2005), 269--284.

\bibitem{Gessel78}
I. Gessel and R.P. Stanley, \textit{Stirling polynomials}, J. Combin. Theory Ser. A, \textbf{24} (1978), 25--33.

\bibitem{Haglund12}
J. Haglund and M. Visontai, \textit{Stable multivariate Eulerian polynomials and
generalized Stirling permutations}, European J. Combin., \textbf{33} (2012), 477--487.

\bibitem{Janson08}
S. Janson, \textit{Plane recursive trees, Stirling permutations and an urn model},
Proceedings of Fifth Colloquium on Mathematics and Computer Science,
Discrete Math. Theor. Comput. Sci. Proc., vol. AI, 2008, pp. 541--547.

\bibitem{Janson11}
S. Janson, M. Kuba, A. Panholzer, \textit{Generalized Stirling permutations, families of increasing trees and urn models}, J. Combin.
Theory Ser. A, \textbf{118} (2011), 94--114.


\bibitem{Lin21}
Z. Lin, J. Ma, P.B. Zhang, \textit{Statistics on multipermutations and partial $\gamma$-positivity}, J. Comb. Theory, Ser. A,
\textbf{183} (2021), 105488.

%


\bibitem{Ma19}
S.-M. Ma, J. Ma, Y.-N. Yeh, \textit{$\gamma$-positivity and partial $\gamma$-positivity of descent-type polynomials}, J. Combin. Theory Ser. A, \textbf{167} (2019), 257--293.



\bibitem{Mansour16}
T. Mansour, M. Shattuck, \textit{Counting permutations by the number of successions within cycles}, Discrete Math., \textbf{339}(4) (2016), 1368--1376.

\bibitem{Park1994}
S.K. Park, \textit{The $r$-multipermutations}, J. Combin. Theory Ser. A, \textbf{67} (1994), 44--71.

\bibitem{Petersen07}
T.K. Petersen, \textit{Enriched $P$-partitions and peak algebras}, Adv. Math., \textbf{209}(2) (2007), 561--610.



\bibitem{Riordan45}
J. Riordan, \textit{Permutations without 3-sequences}, Bull. Amer. Math. Soc., \textbf{51} (1945), 745--748.

\bibitem{Roselle68}
D.P. Roselle, \textit{Permutations by number of rises and successions}, Proc. Amer. Math. Soc., \textbf{19} (1968), 8--16.
%


\bibitem{Zeng16}
H. Shin and J. Zeng, \textit{Symmetric unimodal expansions of excedances in colored permutations}, European J. Combin., \textbf{52} (2016), 174--196.


\bibitem{Sloane}
N.J.A. Sloane, \textit{The On-Line Encyclopedia of Integer Sequences},
published electronically at https://oeis.org, 2010.

\bibitem{Stanley2011}
R.P. Stanley, \textit{Enumerative Combinatorics}, Vol 1. Second edition, Cambridge Studies in Advanced Mathematics, 2011.




\bibitem{Stembridge97}
J. Stembridge, \textit{Enriched $P$-partitions}, Trans. Amer. Math. Soc., \textbf{349}(2) (1997), 763--788.



\bibitem{Zhuang17}
Y. Zhuang, \textit{Eulerian polynomials and descent statistics}, Adv. in Appl. Math., \textbf{90} (2017), 86--144.
\end{thebibliography}
\end{document}